\documentclass{amsart}
\usepackage{amsmath}
\usepackage{amssymb}
\usepackage{amsthm}
\usepackage{enumerate}
\usepackage[dvips]{graphicx}
\theoremstyle{definition}
\newtheorem{definition}{Definition}[section]
\theoremstyle{plain}
\newtheorem{lemma}[definition]{Lemma}
\newtheorem{theorem}[definition]{Theorem}
\newtheorem{proposition}[definition]{Proposition}
\newtheorem{corollary}[definition]{Corollary}
\theoremstyle{remark}
\newtheorem{remark}[definition]{Remark}

\newtheorem*{conjecture}{Conjecture}

\newcommand{\myint}{\operatorname{int}}

\makeatletter
\@namedef{subjclassname@2020}{
  \textup{2020} Mathematics Subject Classification}
\makeatother

\begin{document}

\title[Definable functions]{Functions definable in definably complete uniformly locally o-minimal structure of the second kind}
\author[M. Fujita]{Masato Fujita}
\address{Department of Liberal Arts,
Japan Coast Guard Academy,
5-1 Wakaba-cho, Kure, Hiroshima 737-8512, Japan}
\email{fujita.masato.p34@kyoto-u.jp}

\begin{abstract}
We investigate continuous functions definable in a definably complete uniformly locally o-minimal expansion of the second kind of a densely linearly ordered abelian group (DCULOAS structure).

We prove a variant of the Arzela-Ascoli theorem for uniformly continuous definable functions and the following assertion:
Consider the parameterized function $f:C \times P \rightarrow M$ which is equi-continuous with respect to $P$.
The projection image of the set at which $f$ is discontinuous to the parameter space $P$ is of dimension smaller than $\dim P$ when $C$ is closed and bounded.

In addition, we demonstrate that an archimedean DCULOAS structure which enjoys definable Tietze extension property is o-minimal.
In the appendix, we show that an o-minimal expansion of an ordered group is not semi-bounded if and only if it enjoys definable Tietze extension property.
\end{abstract}

\subjclass[2020]{Primary 03C64}

\keywords{uniformly locally o-minimal structure, definable Tietze extension theorem}

\maketitle

\section{Introduction}\label{sec:intro}
An o-minimal structure enjoys tame properties such as monotonicity and definable cell decomposition \cite{vdD,KPS,PS}. 
Toffalori and Vozoris first introduced locally o-minimal structures in \cite{TV}.
Roughly speaking, a locally o-minimal structure is defined by simply localizing the definition of an o-minimal structure.
See their paper \cite{TV} for the precise definition of locally o-minimal structures.
In spite of its similarity to the definition of o-minimal structures, a locally o-minimal structure does not enjoy the localized counterparts such as local monotonicity theorem and local definable cell decomposition theorem.

A uniformly locally o-minimal structure of the second kind was first introduced in \cite{Fuji} as a structure which enjoys a local monotonicity theorem \cite[Theorem 3.2]{Fuji} and a local definable cell decomposition theorem \cite[Theorem 4.2]{Fuji} with the extra cost of definably completeness \cite{M}.
If it is also an expansion of a densely linearly ordered abelian group, it also enjoys more tame properties such as good dimension theory and a decomposition into special submanifolds \cite{Fuji, Fuji2, Fuji3}.
We consider a definably complete uniformly locally o-minimal expansion of the second kind of a densely linearly ordered abelian group in this paper.
We call it a DCULOAS structure for short.

We investigate functions definable in a DCULOAS structure in this paper.
The good dimension theory enables us to investigate definable continuous functions.
For instance, a definable continuous function on a definable closed bounded set is uniformly continuous.
We also demonstrate a variant of the Arzela-Ascoli theorem for definable functions.
Consider the parameterized function $f:C \times P \rightarrow M$ which is equi-continuous with respect to $P$.
One of our main theorems is that the projection image of the set at which $f$ is discontinuous to the parameter space $P$ is of dimension smaller than $\dim P$ when $C$ is closed and bounded.

Functions definable in a DCULOAS structure enjoy several tame properties as above.
Definable Tietze extension theorem is a convenient tool for topological studies of structures such as \cite{T}.
It is available in a definably complete expansion of an ordered field \cite[Lemma 6.6]{AF}.
Unfortunately, a uniformly locally o-minimal expansion of the second kind of an ordered field is o-minimal \cite[Proposition 2.1]{Fuji}.
The author's concern is whether a DCULOAS structure enjoys definable Tietze extension property.
We obtain a negative partial result on this conjecture.
An archimedean DCULOAS structure which enjoys definable Tietze extension property is o-minimal. 
We also demonstrate that an o-minimal expansion of an ordered group is not semi-bounded if and only if it enjoys definable Tietze extension property in the appendix.

We introduce the terms and notations used in this paper.
The term `definable' means `definable in the given structure with parameters' in this paper.
For a linearly ordered structure $\mathcal M=(M,<,\ldots)$, an open interval is a definable set of the form $\{x \in R\;|\; a < x < b\}$ for some $a,b \in M$.
It is denoted by $]a,b[$ in this paper.
We define a closed interval similarly. 
It is denoted by $[a,b]$.
An open box in $M^n$ is the direct product of $n$ open intervals.
When the structure $\mathcal M$ is an expansion of an abelian group.
The notation $M_{>r}$ denotes the set $\{x \in M\;|\;x>r\}$ for any $r \in M$.
We set $|x|:=\max_{1 \leq i \leq n}|x_i|$ for any vector $x = (x_1, \ldots, x_n) \in M^n$.
The function $|x-y|$ defines a distance in $M^n$.
Let $A$ be a subset of a topological space.
The notations $\myint(A)$ and $\overline{A}$ denote the interior and the closure of the set $A$, respectively.

This paper is organized as follows:
We first review the previous works in Section \ref{sec:review}.
The dimension theory of sets definable in a definably complete uniformly locally o-minimal structure of the second kind is reviewed in this section.
Definable choice lemma for a DCULOAS structure is necessary for our study.
Section \ref{sec:definable_choice} is devoted to the lemma and its corollaries.
We prove the main theorems other than the monotonicity theorem and the theorem on Tietze extension in Section \ref{sec:property} using the assertions in the previous sections.
We consider definable Tietze extension property in Section \ref{sec:tietze}.
In Section \ref{sec:appendix}, we investigate definable Tietze extension property in o-minimal structures.

\section{Review of previous works}\label{sec:review}
We first review the dimension theory of sets definable in a definably complete uniformly locally o-minimal structure of the second kind.
A definably complete uniformly locally o-minimal structure of the second kind admits local definable cell decomposition by \cite[Theorem 4.2]{Fuji}.
We review the definitions of cells and local definable cell decomposition in \cite[Definition 4.1]{Fuji}.

\begin{definition}[Definable cell decomposition]
Consider a densely linearly ordered structure $\mathcal M=(M,<,\ldots)$.
Let $(i_1, \ldots, i_n)$ be a sequence of zeros and ones of length $n$.
\textit{$(i_1, \ldots, i_n)$-cells} are definable subsets of $M^n$ defined inductively as follows:
\begin{itemize}
\item A $(0)$-cell is a point in $M$ and a $(1)$-cell is an open interval in $M$.
\item An $(i_1,\ldots,i_n,0)$-cell is the graph of a continuous definable function defined on an $(i_1,\ldots,i_n)$-cell.
An $(i_1,\ldots,i_n,1)$-cell is a definable set of the form $\{(x,y) \in C \times M\;|\; f(x)<y<g(x)\}$, where $C$ is an $(i_1,\ldots,i_n)$-cell and $f$ and $g$ are definable continuous functions defined on $C$ with $f<g$.
\end{itemize}
A \textit{cell} is an $(i_1, \ldots, i_n)$-cell for some sequence $(i_1, \ldots, i_n)$ of zeros and ones.
An \textit{open cell} is a $(1,1, \ldots, 1)$-cell.

We inductively define a \textit{definable cell decomposition} of an open box $B \subset M^n$.
For $n=1$, a definable cell decomposition of $B$ is a partition $B=\bigcup_{i=1}^m C_i$ into finite cells.
For $n>1$, a definable cell decomposition of $B$ is a partition $B=\bigcup_{i=1}^m C_i$ into finite cells such that $\pi(B)=\bigcup_{i=1}^m \pi(C_i)$ is a definable cell decomposition of $\pi(B)$, where $\pi:M^n \rightarrow M^{n-1}$ is the projection forgetting the last coordinate.
Consider a finite family $\{A_\lambda\}_{\lambda \in \Lambda}$ of definable subsets of $B$.
A \textit{definable cell decomposition of $B$ partitioning $\{A_\lambda\}_{\lambda \in \Lambda}$} is a definable cell decomposition of $B$ such that the definable sets $A_{\lambda}$ are unions of cells for all $\lambda \in \Lambda$. 
\end{definition}

When a locally o-minimal structure admits local definable cell decomposition, we can define the dimension of definable sets and it enjoys several good properties.
Three equivalent definitions of dimension are given in \cite[Definition 5.1, Corollary 5.3]{Fuji}.
We only review the assertions on dimension in \cite{Fuji, Fuji2} which are necessary for this study.

\begin{proposition}\label{prop:pre}
Let $\mathcal M=(M,<,\ldots)$ be a definably complete uniformly locally o-minimal structure of the second kind.
The following assertions hold true:
\begin{enumerate}
\item[(1)] Let $X \subset Y$ be definable sets.
Then, the inequality $\dim(X) \leq \dim(Y)$ holds true.
\item[(2)] Let $X$ be a subset of $M^n$. The set $X$ is of dimension $n$ if and only if $X$ has a nonempty interior.
\item[(3)] A definable subset of $M^n$ is of dimension $ \geq d$ if it contains an $(i_1, \ldots, i_n)$-cell with $\sum_{j=1}^n i_j \geq d$.
\item[(4)] Let $X $ be a nonempty definable subset of $M^n$.
There exists a point $x \in X$ such that $\dim (X \cap U)=\dim X$ for any open box $U$ containing the point $x$.
\item[(5)] Let $X$ and $Y$ be definable subsets of $M^n$.
We have $$\dim(X \cup Y)=\max\{\dim(X),\dim(Y)\}\text{.}$$
\item[(6)] Let $X$ be a definable set.
The frontier $\overline{X} \setminus X$ is of dimension smaller than $\dim X$.
\item[(7)] Assume that $\mathcal M$ is a DCULOAS structure.
Let $f:X \rightarrow R$ be a definable function.
The set of the points at which $f$ is discontinuous is of dimension smaller than $\dim(X)$.
\end{enumerate}
\end{proposition}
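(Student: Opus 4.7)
The plan is to treat this proposition as a compilation: since it is explicitly placed in the ``Review of previous works'' section, each clause should be traced back to the corresponding result in \cite{Fuji, Fuji2}. My proof would be a list of citations rather than a from-scratch argument, but I would nonetheless want to know, for each clause, which of the three equivalent definitions of dimension in \cite[Definition 5.1, Corollary 5.3]{Fuji} is most convenient.

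For clauses (1), (3), (5), I would use the cell-theoretic definition: $\dim X$ is the supremum of $\sum_{j=1}^n i_j$ over $(i_1,\ldots,i_n)$-cells contained in $X$ (after a local cell decomposition). Then (1) is immediate from $X \subset Y$, (3) is tautological, and (5) follows by partitioning $X \cup Y$ using a single cell decomposition compatible with both. For (2), one direction uses that a $(1,\ldots,1)$-cell is open; the other uses that any nonempty open subset of $M^n$ contains an open box, on which one can apply local definable cell decomposition to extract an open cell. For (4), I would start from any point of $X$, apply local definable cell decomposition on a neighborhood, restrict attention to cells in $X$ realizing the maximal local dimension, and then iterate with shrinking neighborhoods: definable completeness supplies the limit point lying in a cell of maximal dimension, and this point witnesses the claim.

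Clauses (6) and (7) are the substantive ones. For (6), the frontier theorem, I would cite the corresponding statement in \cite{Fuji, Fuji2}; the underlying argument inducts on $n$ using cell decomposition, showing that the frontier of each cell in a decomposition is a union of lower-dimensional cells, and then using (5) to combine. For (7), I would cite the discontinuity theorem in \cite{Fuji2}. The strategy underlying that result is: the set $D$ of discontinuity points is definable, so if $\dim D = \dim X$ then by (2) and (4) one may find an open box $U \subset X$ on which $f$ is discontinuous on a set of full dimension; applying the local monotonicity theorem \cite[Theorem 3.2]{Fuji} to the one-variable restrictions along cell fibers produces open subcells on which $f$ is continuous, yielding a contradiction via an inductive argument on $n$.

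The main obstacle, were one to reprove these in place rather than cite, is clause (7): the local (rather than global) nature of cell decomposition in the DCULOAS setting forces one to patch continuity across overlapping local decompositions, which is exactly the delicate step carried out in \cite{Fuji2}. Given the review character of the section, my proof would acknowledge this by limiting itself to citations.
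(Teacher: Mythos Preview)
Your approach is correct and matches the paper's own proof, which is likewise nothing more than a list of precise citations: (1) is \cite[Lemma 5.1]{Fuji}, (2)--(4) are \cite[Corollary 5.3]{Fuji}, (5) is \cite[Corollary 5.4(ii)]{Fuji}, (6) is \cite[Theorem 5.6]{Fuji}, and (7) is \cite[Corollary 1.2]{Fuji2}. The paper does not include any of the proof sketches you outline; for a review section those sketches are optional commentary, and you should be aware that a couple of them (notably the iterated-shrinking argument for (4) and the reduction to an open box $U\subset X$ for (7)) would need tightening if you ever intended them as actual proofs rather than heuristics.
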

\begin{proof}
(1) \cite[Lemma 5.1]{Fuji}; 
(2) through (4) \cite[Corollary 5.3]{Fuji};
(5) \cite[Corollary 5.4(ii)]{Fuji}; 
(6) \cite[Theorem 5.6]{Fuji}; 
(7) \cite[Corollary 1.2]{Fuji2}.
\end{proof}

We also need the following lemma on the dimension of a definable subset in $M$.

\begin{lemma}\label{lem:discrete}
Let $\mathcal M=(M,<,\ldots)$ be a definably complete uniformly locally o-minimal structure of the second kind.
Let $X$ be a definable subset of $M$.
It is of dimension zero if and only if it is discrete and closed.
If $X$ is of dimension zero and bounded below, we have $\inf X \in X$.
\end{lemma}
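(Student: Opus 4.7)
The overall strategy is to exploit the local definable cell decomposition available in a definably complete uniformly locally o-minimal structure of the second kind, together with the characterization of dimension via nonempty interior from Proposition \ref{prop:pre}(2). Since cells in $M$ come in only two flavors (points and open intervals), any zero-dimensional definable set will be forced to be locally finite, from which discreteness and closedness both follow.

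For the forward implication of the biconditional, I would take an arbitrary $x \in M$ and apply local definable cell decomposition to the family $\{X\}$: there is an open interval $I \ni x$ such that $X \cap I$ is a finite union of $0$- and $1$-cells, i.e.\ finitely many points and finitely many open subintervals of $I$. Because $\dim(X \cap I) \le \dim X = 0$ by Proposition \ref{prop:pre}(1), Proposition \ref{prop:pre}(2) forces $X \cap I$ to have empty interior, so no $1$-cell can appear, and $X \cap I$ is finite. Applied at every $x \in M$, this shows $X$ is locally finite, hence discrete and closed (a limit point of $X$ would have every neighbourhood meeting $X$ in infinitely many points).

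For the reverse implication, if $X$ is discrete and closed then in particular $X$ has empty interior (any nonempty open interval in a densely ordered group has non-isolated points), so $\dim X \neq n = 1$; by Proposition \ref{prop:pre}(2) this gives $\dim X = 0$.

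For the final clause, suppose $X$ is zero-dimensional and bounded below. Definable completeness of $\mathcal{M}$ provides $a := \inf X \in M$. If $a \notin X$, then by definition of infimum, for every $b > a$ there is some $x \in X$ with $a < x < b$, so $a$ is a limit point of $X$; but the first part of the lemma forces $X$ to be closed, contradicting $a \notin X$. Hence $\inf X \in X$. I expect no real obstacle here; the only delicate point is to correctly invoke local (not global) cell decomposition, since in a DCULOAS structure cell decomposition only exists on sufficiently small open boxes, but this is exactly what is needed for the local finiteness argument.
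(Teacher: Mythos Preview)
Your argument is correct and essentially unpacks what the paper's proof does by citation: the paper simply invokes \cite[Lemma~5.2, Corollary~5.3]{Fuji} for the biconditional and then argues the infimum clause exactly as you do (definable completeness gives $\inf X\in M$, closedness puts it in $X$). Your direct use of local cell decomposition together with Proposition~\ref{prop:pre}(1),(2) is precisely the content of those cited results in the one-dimensional case, so the approaches coincide.

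One small terminological slip: in the reverse implication you refer to a ``densely ordered group,'' but Lemma~\ref{lem:discrete} does not assume the group structure (that enters only for DCULOAS structures). What you actually need---and what is already part of the ambient hypotheses on which cells are defined---is that the order is dense, so that nonempty open intervals are infinite and a discrete set has empty interior. With that correction the argument stands as written.
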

\begin{proof}
The first assertion follows from \cite[Lemma 5.2, Corollary 5.3]{Fuji}.
Let $X$ be a definable subset of $M$ of dimension zero.
Since $\mathcal M$ is definably complete, the infimum $\inf X$ is well-defined.
It is finite because $X$ is bounded below.
Since $X$ is closed, we have $\inf X \in X$.
\end{proof}

The following technical lemma is used in Section \ref{sec:property}.

\begin{lemma}\label{lem:ato}
Let $\mathcal M=(M,<,\ldots)$ be a definably complete uniformly locally o-minimal structure of the second kind.
Consider definable subsets $X_1$ and $X_2$ of $M^m$ and $M^n$, respectively.
Let $\pi:M^{m+n} \rightarrow M^m$ be the projection onto the first $m$ coordinates.
Let $W$ be an open box in $M^{m+n}$.
Consider a definable cell decomposition of $W$ partitioning $W \cap (X_1 \times X_2)$.
Assume that a cell $E$ contained in $X_1 \times X_2$ satisfies the following conditions:
\begin{enumerate}
\item[(a)] $\dim \pi(E)=\dim X_1$;
\item[(b)] Any cell $E'$ such that $\dim E'> \dim E$, $\pi(E')=\pi(E)$ and $\overline{E'} \cap E \neq \emptyset$ is not contained in $X_1 \times X_2$.
\end{enumerate}
Then there exist a point $x \in E$ and an open box $U$ in $M^{m+n}$ containing the point $x$ such that $(X_1 \times X_2) \cap U$ is contained in $E$.
\end{lemma}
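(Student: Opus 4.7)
The plan is to locate a point $x\in E$ whose closure-interactions with the other relevant cells of the decomposition are empty, then use finiteness to produce the desired open box. Let $F_1,\dots,F_k$ be the cells of the given decomposition, different from $E$, that are contained in $X_1\times X_2$. If $\dim(E\cap \overline{F_i})<\dim E$ holds for every $i$, then by Proposition \ref{prop:pre}(5) the union $Z:=\bigcup_{i=1}^k(E\cap \overline{F_i})$ has dimension strictly less than $\dim E$, so by Proposition \ref{prop:pre}(1) we may pick $x\in E\setminus Z$. Since $x\notin\overline{F_i}$ for every $i$ and the collection is finite, we obtain an open box $U\ni x$ contained in $W$ and disjoint from every $F_i$; then $(X_1\times X_2)\cap U = E\cap U\subset E$, as required.

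The core task is therefore to establish $\dim(E\cap\overline F)<\dim E$ for each $F=F_i$. Consider the induced cell decomposition of $\pi(W)$ and split on whether $\pi(F)=\pi(E)$. If $\pi(F)=\pi(E)$, the hypotheses $\overline F\cap E\neq\emptyset$ and $F\subset X_1\times X_2$ trigger condition (b), forcing $\dim F\leq\dim E$; since $F$ and $E$ are disjoint, $E\cap\overline F\subset\overline F\setminus F$, and Proposition \ref{prop:pre}(6) gives $\dim(E\cap\overline F)<\dim F\leq \dim E$. If $\pi(F)\neq \pi(E)$, then $\pi(F)$ and $\pi(E)$ are distinct, hence disjoint, cells of the projected decomposition, so
\[
\pi(E\cap\overline F)\subset \pi(E)\cap\overline{\pi(F)}\subset \overline{\pi(F)}\setminus \pi(F),
\]
which by Proposition \ref{prop:pre}(1), (6) and condition (a) has dimension strictly less than $\dim\pi(F)\leq\dim X_1=\dim\pi(E)$.

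Upgrading this projection-level drop to $\dim(E\cap \overline F)<\dim E$ is the main obstacle. I would argue by contradiction: assume $\dim(E\cap\overline F)=\dim E$ and apply Proposition \ref{prop:pre}(4) to $E\cap\overline F$ to obtain a point $p$ for which $\dim((E\cap\overline F)\cap V)=\dim E$ for every open box $V\ni p$; then choose such a $V$ inside $W$. Take a local definable cell decomposition of $V$ partitioning $V\cap E$ and $V\cap \overline F$; since $V\cap E\cap\overline F$ is a finite union of cells of this decomposition and has dimension $\dim E$, at least one of them, say $G\subset V\cap E\cap\overline F$, has $\dim G=\dim E$. Because $G$ is a cell contained in the cell $E$, a coordinatewise comparison of the defining sequences of zeros and ones forces $\dim\pi(G)=\dim\pi(E)$: the fiber inclusions $G_x\subset E_x$ bound the last $n$ entries of the type of $G$ by those of $E$, and $\pi(G)\subset\pi(E)$ bounds the first $m$ entries, so equality of the total sums forces equality in each block. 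But $\pi(G)\subset \pi(E\cap\overline F)$ already has dimension strictly less than $\dim\pi(E)$, a contradiction. Everything else in the argument is routine bookkeeping with the results in Proposition \ref{prop:pre}.
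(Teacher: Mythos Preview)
Your argument is correct. The overall strategy---find a point of $E$ lying outside the closures of the other cells of the decomposition that sit inside $X_1\times X_2$, then shrink an open box---is exactly the paper's strategy, and the treatment of the case $\pi(F)=\pi(E)$ via condition~(b) and the frontier inequality is the same. The one genuine difference is in how you pass from the projection-level estimate $\dim\bigl(\pi(E)\cap\overline{\pi(F)}\bigr)<\dim\pi(E)$ to the full-space estimate $\dim(E\cap\overline F)<\dim E$ when $\pi(F)\neq\pi(E)$. The paper does this directly: it first removes from $\pi(E)$ the closures of all other base cells contained in $X_1$, observes that the remainder $\widetilde D$ still has $\dim\widetilde D=\dim X_1$, and then checks (by a one-line type count together with Proposition~\ref{prop:pre}(3)) that $\pi^{-1}(\widetilde D)\cap E$ still has dimension $\dim E$; only afterwards does it strip off the closures of the cells over $\pi(E)$. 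Your route instead argues by contradiction, taking a local refinement around a generic point of $E\cap\overline F$ and invoking the coordinatewise type inequality $j_l\le i_l$ for a cell $G$ contained in a cell $E$ to force $\dim\pi(G)=\dim\pi(E)$. Both routes rest on the same elementary fact about cell types; the paper's is slightly more economical since it avoids the auxiliary local decomposition, while yours has the virtue of treating all the $F_i$ uniformly and yielding the clean single statement $\dim(E\cap\overline{F_i})<\dim E$.
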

\begin{proof}
Let $E$ be an $(i_1,\ldots,i_{m+n})$-cell.
Consider the family $\mathcal D$ of the cells in $\pi(W)$ contained in $X_1$ which is not $\pi(E)$. 
Set $\widetilde{D}=\pi(E) \setminus \bigcup_{D \in \mathcal D} \overline{D}$.
We have $\dim \widetilde{D}=\dim X_1$ by Proposition \ref{prop:pre}(5) because $\dim \overline{D} \cap \pi(E) \leq \dim \overline{D} \setminus D<\dim D \leq\dim X_1$ for all $D \in\mathcal D$ by Proposition \ref{prop:pre}(1), (6).
Let $D'$ be a cell contained in $\widetilde{D}$ with $\dim D'=\dim X_1$, the intersection $\pi^{-1}(D') \cap E$ is obviously a cell of dimension $\dim D'+i_{m+1}+\cdots+i_{m+n}=\dim E$ by the definition of dimension.
The definable set $F=\pi^{-1}(\widetilde{D}) \cap E$ is of dimension $\dim E$ by Proposition \ref{prop:pre}(3).

Consider the family $\mathcal E$ of the cells $E'$ in $W$ contained in $X_1 \times X_2$ such that $\pi(E)=\pi(E')$ and $\overline{E'} \cap E \neq \emptyset$.
We have $\dim E' \leq \dim E$ by the assumption.
Set $G=F \setminus \bigcup_{E' \in \mathcal E} \overline{E'}$.
We have $\dim G=\dim E$ for the same reason as above.
In particular, the definable set $G$ is not an empty set.
Take a point $x \in G$ and a sufficiently small open box $U$ containing $x$.
It is obvious that $D \cap \pi(U)=\emptyset$ for all $D \in \mathcal D$ and $E' \cap U=\emptyset$ for all $E' \in \mathcal E$ by the definition.
Take an arbitrary cell $E''$ contained in $X_1 \times X_2$ which is not a member of $\mathcal E$ and $\pi(E'')=\pi(E)$.
We have $\overline{E''} \cap E=\emptyset$.
Hence, we may assume that $E'' \cap U=\emptyset$ for such cells $E''$ shrinking $U$ if necessary.
We have demonstrated that $(X_1 \times X_2) \cap U$ is contained in $E$ because $U$ has an empty intersection with the cells contained in $X_1 \times X_2$ other than $E$.
\end{proof}

The following monotonicity theorem holds true.

\begin{theorem}[Monotonicity theorem]\label{thm:mono}
Let $\mathcal M=(M,<,+,0,\ldots)$ be a DCULOAS structure.
Let $I$ be an interval and $f:I \rightarrow M$ be a definable function.
There exists a partition $I=X_d \cup X_c \cup X_+ \cup X_-$ of $I$ into definable sets satisfying the following conditions:
\begin{enumerate}
\item[(1)] the definable set $X_d$ is discrete and closed;
\item[(2)] the definable set $X_c$ is open and $f$ is locally constant on $X_c$;
\item[(3)] the definable set $X_+$ is open and $f$ is locally strictly increasing and continuous on $X_+$;
\item[(4)] the definable set $X_-$ is open and $f$ is locally strictly decreasing and continuous on $X_-$.
\end{enumerate}
\end{theorem}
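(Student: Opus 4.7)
The plan is to define the three ``good'' sets as the evidently correct open sets and take $X_d$ to be the complement. Concretely, let $X_c$ consist of those $x \in I$ admitting an open neighborhood on which $f$ is constant, let $X_+$ consist of those $x$ admitting an open neighborhood on which $f$ is strictly increasing and continuous, and define $X_-$ analogously with ``decreasing''. Each of these is definable by construction and open (if the defining neighborhood works for $x$, it works for each of its points), and they are pairwise disjoint because two incompatible behaviors cannot simultaneously hold on the intersection of two nonempty open subintervals. Setting $X_d := I \setminus (X_c \cup X_+ \cup X_-)$ makes $X_d$ automatically closed, and conditions (2), (3), (4) of the theorem are satisfied by design.

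The only nontrivial claim left is that $X_d$ is discrete. Here I would invoke the local monotonicity theorem \cite[Theorem 3.2]{Fuji}, which for each $x \in I$ supplies an open interval $U \ni x$ together with a finite decomposition of $U$ into points and open subintervals such that, on each subinterval, $f$ is constant, strictly increasing continuous, or strictly decreasing continuous. Any point of $U$ lying in one of these subintervals automatically belongs to $X_c \cup X_+ \cup X_-$, so $X_d \cap U$ is contained in the finite set of boundary points of this local decomposition. Thus $X_d$ is locally finite, hence discrete (and closed, as already noted), which is (1).

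The anticipated obstacle is bookkeeping around the exact form of the local monotonicity theorem: we want its conclusion as a finite partition of a \emph{whole} neighborhood of each point, so that the transition point $x$ itself can be placed inside $X_c$, $X_+$, or $X_-$ whenever its one-sided behaviors match and $f$ is continuous at $x$. Should the cited result be phrased in a weaker punctured-neighborhood form, one can recover the needed version by combining the one-sided monotone behavior on each side of $x$ with the fact, from Proposition \ref{prop:pre}(7) and Lemma \ref{lem:discrete}, that the discontinuity set of $f$ is already discrete and closed, so that only finitely many ``boundary'' points can accumulate in any given bounded subinterval. Once local finiteness of $X_d$ is established, no further argument is required.
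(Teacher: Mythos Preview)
Your argument is sound in spirit and essentially reconstructs what the paper defers to external references for: the paper's own proof is the one-line citation ``Immediate from \cite[Theorem 2.10, Proposition 2.12]{Fuji3}'', whereas you spell out how the global partition is assembled from the local monotonicity theorem of \cite[Theorem 3.2]{Fuji}. Defining $X_c$, $X_+$, $X_-$ as the maximal open loci of the three behaviors, with $X_d$ the leftover, is exactly the natural route, and your use of the local result to show that $X_d$ meets each local decomposition in only finitely many points is correct.

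One point needs tightening. You write that taking $X_d = I \setminus (X_c \cup X_+ \cup X_-)$ makes $X_d$ ``automatically closed'', but as the complement in $I$ of an open set this only yields closedness relative to $I$. When $I$ is open or half-open this does not by itself rule out $X_d$ accumulating at an endpoint of $I$, and the applications in the paper (for instance Corollary~\ref{cor:limit}, which uses $\inf X_d \in X_d$ via Lemma~\ref{lem:discrete}) genuinely require closedness in $M$. The fix is short: once $X_d$ is known to be discrete, apply local o-minimality at an arbitrary point $p \in M$ (including endpoints of $I$) to see that $X_d$ meets some neighborhood of $p$ in a finite union of points and intervals, hence in finitely many points; thus $X_d$ has no accumulation point in $M$. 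Equivalently, a definable discrete subset of $M$ is automatically closed in this setting, which is part of what underlies Lemma~\ref{lem:discrete}. With this adjustment your proof is complete.
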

\begin{proof}
Immediate from \cite[Theorem 2.10, Proposition 2.12]{Fuji3}.
\end{proof}

We also need the following lemma in \cite{Fuji}.
\begin{lemma}\label{lem:dc_mono}
Let $\mathcal M=(M,<,\ldots)$ be a definably complete local o-minimal structure.
 A locally strictly monotone definable function defined on an open interval is strictly monotone.
\end{lemma}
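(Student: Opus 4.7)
The plan is to reduce to a single direction of monotonicity via a definable-connectedness argument and then to establish global monotonicity via a supremum argument.

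First I would denote by $U_+$ (respectively $U_-$) the set of points of $I$ at which $f$ is locally strictly increasing (respectively locally strictly decreasing). Both are definable and open, they are disjoint, and by hypothesis their union is $I$. Using definable completeness, the open interval $I$ admits no partition into two nonempty disjoint definable open subsets: if $p\in U_+$ and $q\in U_-$ satisfy $p<q$, then $s:=\sup(U_+\cap[p,q])$ lies in $I$, and a case split on whether $s\in U_+$ or $s\in U_-$ contradicts the openness of whichever set contains $s$. Hence one of $U_+,U_-$ is empty, and by the symmetric argument I may assume $f$ is locally strictly increasing at every point of $I$.

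Now fix $a<b$ in $I$ and set
\[
T=\{x\in[a,b]\mid f|_{[a,x]}\text{ is strictly increasing}\}.
\]
Local strict monotonicity at $a$ furnishes a right-neighbourhood of $a$ contained in $T$, so $T$ is nonempty; it is bounded above by $b$, so $c:=\sup T$ exists by definable completeness. Local strict monotonicity at $c$ yields $\delta>0$ such that $f$ is strictly increasing on $(c-\delta,c+\delta)\cap I$, and the defining property of the supremum provides some $x\in T$ with $x>c-\delta$.

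The key step is the gluing: for any $y\in(c-\delta,c+\delta)\cap I$ with $y\geq x$, the function $f$ is strictly increasing on $[a,y]$. Indeed, given $p<q$ in $[a,y]$, either both lie in $[a,x]$, or both lie in $(c-\delta,c+\delta)$, or $p\in[a,x]$ and $q\in(x,y]$, in which case $f(p)\leq f(x)<f(q)$. If $c<b$ this produces an element of $T$ strictly larger than $c$ (take $y=\min(c+\delta/2,b)$), a contradiction; hence $c=b$, and the same gluing applied with $y=b$ shows $b\in T$, so in particular $f(a)<f(b)$. The main obstacle is this gluing step, which must combine strict monotonicity on two overlapping intervals without any continuity hypothesis on $f$; the three-case split on the positions of $p$ and $q$ relative to $x$ is what handles this cleanly.
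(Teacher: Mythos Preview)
Your argument is correct. The paper does not give its own proof of this lemma; it simply cites \cite[Proposition~3.1]{Fuji}. Your approach is therefore necessarily different in that it is self-contained: you first use definable connectedness of the open interval (a consequence of definable completeness, as in \cite[Proposition~1.4]{M}, and essentially what your supremum argument on $U_+\cap[p,q]$ reproves) to force a single direction of local monotonicity, and then run a second supremum argument on the set $T$ together with the three-case gluing to pass from local to global strict monotonicity. One pleasant by-product of your proof is that it makes visible that local o-minimality plays no role here---only definable completeness is invoked. The paper's route, by contrast, offloads the work to the earlier reference and is correspondingly shorter on the page.
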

\begin{proof}
\cite[Proposition 3.1]{Fuji}
\end{proof}

The following corollary guarantees the existence of the limit.

\begin{corollary}\label{cor:limit}
Let $\mathcal M=(M,<,+,0,\ldots)$ be a DCULOAS structure.
Let $s>0$ and $f:]0,s[ \rightarrow M^n$ be a bounded definable map.
There exists a unique point $x \in M^n$ satisfying the following condition:
$$
\forall \varepsilon >0, \exists \delta>0, \forall t, \ 0<t<\delta \Rightarrow |x-f(t)| < \varepsilon \text{.}
$$ 
The notation $\lim_{t \to +0}f(t)$ denotes the point $x$.
\end{corollary}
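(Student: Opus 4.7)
Uniqueness is immediate: if two points $x, y \in M^n$ both satisfy the condition, applying it with $\varepsilon = |x-y|/2$ and the triangle inequality forces $x = y$. For existence, the plan is to reduce coordinate by coordinate to the case $n = 1$, so assume $f : ]0,s[ \to M$ is a bounded definable function.

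Apply the monotonicity theorem (Theorem \ref{thm:mono}) to get a partition $]0,s[ = X_d \cup X_c \cup X_+ \cup X_-$, then apply local definable cell decomposition to a small open box around $0$ refining this partition. The decomposition is finite, so it produces a cell of the form $]0, \delta[$ for some $\delta > 0$ entirely contained in one of the four pieces. Since $]0,\delta[$ is one-dimensional while $X_d$ is zero-dimensional by Lemma \ref{lem:discrete}, Proposition \ref{prop:pre}(1) rules out $X_d$, so $]0,\delta[$ lies in $X_c$, $X_+$, or $X_-$.

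If $]0,\delta[ \subseteq X_+$ (symmetrically for $X_-$), then $f$ is locally strictly increasing and continuous on $]0,\delta[$, hence strictly increasing there by Lemma \ref{lem:dc_mono}. Boundedness plus definable completeness yields $L := \inf_{t \in ]0,\delta[} f(t)$, and a routine $\varepsilon$-argument, choosing $t_0$ with $f(t_0) < L + \varepsilon$ and invoking strict monotonicity for $t < t_0$, shows $\lim_{t \to 0^+} f(t) = L$.

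The delicate case is $]0,\delta[ \subseteq X_c$, where $f$ is only known to be locally constant and continuous. Fix $t_0 \in ]0,\delta[$ and set $c = f(t_0)$. Both $E := \{t \in ]0,\delta[ : f(t) = c\}$ and its complement in $]0,\delta[$ are open, since every level set of a locally constant function is open. Let $s^* = \inf\{t \in ]0,t_0] : [t, t_0] \subseteq E\}$, which exists by definable completeness. The key step is to show $s^* = 0$: if $s^* > 0$, then either $s^* \in E$, in which case openness of $E$ allows the infimum to be lowered (contradiction), or $s^* \notin E$, in which case openness of the complement contradicts $]s^*, t_0] \subseteq E$. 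Hence $s^* = 0$, so $f \equiv c$ on $]0,t_0[$ and the limit equals $c$. This argument is essentially a definable substitute for the connectedness of an interval, and it is the main obstacle in the proof.
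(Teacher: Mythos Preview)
Your proof is correct and follows essentially the same route as the paper's: reduce to $n=1$, apply the monotonicity theorem, pass to a subinterval lying in one of $X_c$, $X_+$, $X_-$, and handle the monotone cases via $\inf$/$\sup$. The only noteworthy difference is your treatment of the $X_c$ case: what you call ``the main obstacle'' and prove by hand is precisely the definable connectedness of intervals, which the paper simply cites from \cite[Proposition 1.4]{M} and already uses to reduce to a single case after discarding $X_d$ via Lemma~\ref{lem:discrete}; with that fact available the locally constant case is immediate, and the paper dismisses it together with $X_+$ as similar to the $X_-$ case it writes out.
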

\begin{proof}
We first reduce to the case in which $n=1$.
Assume that the corollary holds true for $n=1$.
Let $\pi_i$ be the projection onto the $i$-th coordinate for all $1 \leq i \leq n$.
Apply the corollary to the composition $\pi_i \circ f$.
Set $x_i=\lim_{t \to +0}\pi_i \circ f(t)$ for all $1 \leq i \leq n$.
It is obvious that $x=(x_1, \ldots, x_n)$ is the unique point satisfying the condition in the corollary.
We have succeeded in reducing to the case in which $n=1$.

Set $I=]0,s[$.
Applying Theorem \ref{thm:mono} to $f$, we get a partition $I=X_d \cup X_c \cup X_+ \cup X_-$ into definable sets satisfying the conditions in Theorem \ref{thm:mono}.
Since $X_d$ is discrete and closed, we have $\inf X_d \in X_d$ by Lemma \ref{lem:discrete}.
Shrinking the interval $I$ if necessary, we may assume that $X_d$ is an empty set.
Since the interval $I$ is definably connected by \cite[Proposition 1.4]{M}, we have $I=X_c$, $I=X_+$ or $I=X_-$.
We only consider the case in which $I=X_-$.
We can prove the corollary similarly in the other cases.

The function $f$ is strictly decreasing by Lemma \ref{lem:dc_mono} because $I=X_-$.
Set $x=\inf_{0<t<s}f(t)$, which exists because $f$ is bounded.
It is obvious the point $x$ satisfies the required condition because $f$ is strictly decreasing.
Let $x'$ be another point satisfying the condition.
We fix an arbitrary $\varepsilon >0$.
There exists $\delta>0$ with $|x-f(t)|<\varepsilon$ whenever $0<t<\delta$.
There exists $\delta'>0$ with $|x'-f(t)|<\varepsilon$ whenever $0<t<\delta'$.
Set $\delta''=\min \{\delta,\delta'\}$.
We have $|x-x'| \leq |x-f(t)|+|x'-f(t)| < 2\varepsilon$ whenever $0<t<\delta''$.
We get $x=x'$ because $\varepsilon$ is an arbitrary positive element.
\end{proof}

\section{Definable choice}\label{sec:definable_choice}

We review the following definable choice lemma and its applications.

\begin{lemma}[Definable choice]\label{lem:definable_choice}
Consider a definably complete expansion of a densely linearly ordered abelian group  $\mathcal M=(M,<,+,0,\ldots)$.
Let $X$ be a definable subset of $M^{m+n}$.
The notation $\pi:M^{m+n} \rightarrow M^n$ denotes the projection onto the last $n$ coordinates.
There exists a definable map $\varphi:\pi(X) \rightarrow X$ such that the composition $\pi \circ \varphi$ is the identity map on $\pi(X)$.
\end{lemma}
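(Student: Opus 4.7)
The natural approach is induction on $m$. The base case $m=0$ is immediate, since $\pi$ is the identity on $M^n$ and $\varphi:=\mathrm{id}_{\pi(X)}$ works.

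For the induction step, I would factor $\pi$ as $\pi_2\circ\pi_1$, where $\pi_1:M^{m+n}\to M^{(m-1)+n}$ drops the first coordinate and $\pi_2:M^{(m-1)+n}\to M^n$ drops the next $m-1$ coordinates. The induction hypothesis (with $m-1$ in place of $m$) applied to $\pi_1(X)\subset M^{(m-1)+n}$ yields a definable section $\psi:\pi(X)\to\pi_1(X)$ of $\pi_2|_{\pi_1(X)}$, and the $m=1$ case applied to $X\subset M^{1+((m-1)+n)}$ with projection $\pi_1$ yields a definable section $\varphi_1:\pi_1(X)\to X$ of $\pi_1|_X$. Then $\varphi:=\varphi_1\circ\psi$ satisfies
\[
\pi\circ\varphi=\pi_2\circ\pi_1\circ\varphi_1\circ\psi=\pi_2\circ\psi=\mathrm{id}_{\pi(X)}.
\]

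All the substantive work is therefore in the $m=1$ case: uniformly in $y\in\pi(X)\subset M^n$, I must canonically choose a point of the nonempty definable fiber $X_y:=\{x\in M:(x,y)\in X\}$. My first attempt would be a case analysis driven by definable completeness. If $X_y$ has a minimum element, take it. Otherwise let $a_y:=\inf X_y$, which exists in $M$ whenever $X_y$ is bounded below; when $a_y\in X_y$ we are done, and when $a_y\notin X_y$ I would attempt to pass to a canonical definable refinement of $X_y$ near $a_y$, using the group structure to single out a bounded window such as $X_y\cap[a_y,a_y+c]$ for a suitable definable positive $c$, and then iterate or invoke $\inf$ again on the refinement. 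The symmetric situations (no minimum but a maximum, or unbounded on both sides) should be reducible to the previous ones via a definable translation so that, e.g., a distinguished element of $X_y$ nearest to $0$ is selected.

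The principal obstacle is precisely this $m=1$ case. Our standing hypotheses are only definable completeness and the abelian group structure; we have no local o-minimality and no cell decomposition at our disposal. Consequently a definable subset of $M$ can be quite pathological (for instance, one with no minimum, no maximum, and no isolated point at all), so a naive $\inf$/$\sup$ selection need not land in the set, and the iterative refinement sketched above need not terminate after finitely many stages. The delicate technical step is to produce a first-order definable expression that both selects an element of $X_y$ and is uniform in $y$; I expect this will require a genuinely clever combination of definable completeness with the group operations rather than a straightforward case analysis.
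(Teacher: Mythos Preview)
Your inductive reduction to $m=1$ via the factorisation $\pi=\pi_2\circ\pi_1$ is exactly the paper's scheme, so the scaffolding is correct; the gap is precisely where you locate it---you do not produce the uniform selector on fibres. The paper's device is short. Fix once and for all a positive $c\in M$. For a nonempty definable $J\subset M$ set $e(J)=0$ if $0\in J$; otherwise, if $J$ meets $]0,\infty[$, put $a=\inf(J\cap M_{>0})$ and $b=\sup\{x>0:\ ]a,x[\ \subset J\}$, and take $e(J)=(a+b)/2$ when $b<\infty$ and $e(J)=a+c$ when $b=\infty$; handle the purely negative case symmetrically. Then $\varphi(y)=(e(X_y),y)$ is the required section. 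Definable completeness supplies $a$ and $b$; divisibility of $(M,+)$ (a consequence of definable completeness for ordered groups) supplies the midpoint. The obstacle you identified---that $\inf J$ need not lie in $J$---is bypassed not by iterating a refinement but by taking the \emph{midpoint} of the maximal interval $]a,b[\subset J$ beginning at $a$, which certainly lands in $J$. Anchoring the whole construction at $0$ rather than at $\inf X_y$ also removes your boundedness worry in one stroke. So your intuition that the solution must combine definable completeness with the group operations was right; the specific combination you were missing is ``infimum of the positive part, then maximal initial interval, then midpoint.''

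Your concern about truly pathological fibres (dense and codense near $a$) is not idle in the bare generality of the lemma as stated: the paper's assertion that $e(J)\in J$ relies on $J$ containing an interval immediately to the right of $a$ whenever $a\notin J$, which is automatic under local o-minimality---the standing hypothesis everywhere the lemma is actually invoked---even if not under definable completeness alone.
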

\begin{proof}
Let $c$ be a positive element in $M$.
For any nonempty definable set $J$ in $M$, we define $e(J)$ as follows:

We set $e(J)=0$ if $0 \in J$.
Otherwise, if the intersection $]0 ,+\infty[ \cap J$ is not empty, set $a=\inf\{x>0\;|\;x \in J\}$ and $b=\sup \{x >0\;|\; ]a,x[ \subset J\}$.
Set $e(J)=\frac{a+b}{2}$ if $b < \infty$ and $e(J)=a+c$ otherwise.
The former is well-defined because $(M,+)$ is a divisible group by \cite[Proposition 2.2]{M}.
In the remaining case, the intersection $]-\infty,0[ \cap J$ is not empty.
Set $a=\sup\{x<0\;|\;x \in J\}$ and $b=\inf \{x <0\;|\; ]x,a[ \subset J\}$.
Set $e(J)=\frac{a+b}{2}$ if $b > -\infty$ and $e(J)=a-c$ otherwise.
It is obvious that $e(J) \in J$ in all the cases.

We demonstrate the lemma by induction on $m$.
For any $x \in \pi(X)$, we set $X_x=\{y \in M^m\;|\; (y,x) \in X\}$.
When $m=1$, the definable function $\varphi:\pi(X) \rightarrow X$ given by $\varphi(x)=e(X_x)$ satisfies the requirement.
When $m>1$, consider the projections $\pi_1:M^{m+n} \rightarrow M^{m+n-1}$ and $\pi_2:M^{m+n-1} \rightarrow M^n$.
The former forgets the first coordinate and the latter is the projection onto the last $n$ coordinates.
It is obvious that $\pi=\pi_2 \circ \pi_1$.
There exist definable maps $\varphi_1:\pi_1(X) \rightarrow X$ and $\varphi_2:\pi(X) \rightarrow \pi_1(X)$ such that $\pi_i \circ \varphi_i$ are the identity maps on the domains of definition of $\varphi_i$ for $i=1,2$.
The composition $\varphi=\varphi_1 \circ \varphi_2$ satisfies the requirement.
\end{proof}

The following curve selection lemma is worth to be mentioned.

\begin{corollary}\label{cor:curve_selection}
Consider a DCULOAS structure $\mathcal M=(M,<,+,0,\ldots)$.
Let $X$ be a definable subset of $M^n$ which is not closed.
Take a point $a \in \overline{X} \setminus X$.
There exist a small positive $\varepsilon$ and a definable continuous map $\gamma:]0,\varepsilon[ \rightarrow X$ such that $\lim_{t \to +0}\gamma(t)=a$.
\end{corollary}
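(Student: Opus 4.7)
My plan is to produce $\gamma$ in two steps: first use Lemma \ref{lem:definable_choice} to pick any definable selection $\psi \colon M_{>0} \to X$ whose values approach $a$, then apply Theorem \ref{thm:mono} component by component to shrink the domain to an open interval on which $\psi$ is automatically continuous.

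For the selection step, consider the definable set
$$Z = \{(x,t) \in M^n \times M_{>0} \mid x \in X,\ |x-a|<t\}\text{.}$$
The assumption $a \in \overline{X}$ is exactly the statement that every slice $\{x \in X \mid |x-a|<t\}$ is nonempty, so the projection of $Z$ onto its last coordinate is all of $M_{>0}$. Lemma \ref{lem:definable_choice} then yields a definable section, and taking its first $n$ coordinates gives a definable map $\psi\colon M_{>0} \to X$ with $|\psi(t)-a|<t$ for every $t>0$.

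For the continuity step, fix an arbitrary $\varepsilon_0>0$ and write $\psi=(\psi_1,\ldots,\psi_n)$. For each $i$ I would apply Theorem \ref{thm:mono} to $\psi_i$ on $]0,\varepsilon_0[$, partitioning that interval as $X_d^{(i)} \cup X_c^{(i)} \cup X_+^{(i)} \cup X_-^{(i)}$ with $\psi_i$ continuous on each of the three open pieces. The crux is that Lemma \ref{lem:discrete} forces $\inf X_d^{(i)} \in X_d^{(i)}$ whenever $X_d^{(i)} \ne \emptyset$, and since $X_d^{(i)} \subset \,]0,\varepsilon_0[$ this infimum is strictly positive. Choosing $\varepsilon>0$ smaller than every such infimum makes each $X_d^{(i)}$ disjoint from $]0,\varepsilon[$, so every $\psi_i$, and therefore $\psi$ itself, is continuous on the entire interval $]0,\varepsilon[$. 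Setting $\gamma = \psi|_{]0,\varepsilon[}$ then yields the desired definable continuous curve, and the bound $|\gamma(t)-a|<t$ immediately gives $\lim_{t\to+0}\gamma(t)=a$.

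The only delicate point is ruling out the possibility that the discrete pieces $X_d^{(i)}$ accumulate at $0$; this is precisely the content of the second half of Lemma \ref{lem:discrete}, so I do not expect additional difficulty beyond the steps above.
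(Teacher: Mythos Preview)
Your argument is correct and shares the paper's two-step outline (definable choice, then shrink the domain to obtain continuity), but the implementation of each step differs slightly.

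For the selection step you use the set $\{(x,t): x\in X,\ |x-a|<t\}$, whose projection onto $M_{>0}$ is trivially surjective since $a\in\overline{X}$. The paper instead uses $\{(x,t): x\in X,\ |x-a|=t\}$ and must invoke local o-minimality to see that the projection contains an interval $]0,\varepsilon[$; in exchange it obtains the exact relation $|\gamma(t)-a|=t$. Your version is marginally cleaner and avoids that extra step.

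For the continuity step the paper appeals directly to Proposition~\ref{prop:pre}(7): the discontinuity locus $D$ of $\gamma$ has dimension $<1$, hence dimension zero, and one application of Lemma~\ref{lem:discrete} gives $\inf D>0$. You instead run the monotonicity theorem coordinatewise and apply Lemma~\ref{lem:discrete} to each $X_d^{(i)}$. Both are legitimate; indeed the paper itself uses exactly your monotonicity-based shrinking argument in the proof of Corollary~\ref{cor:limit}. The paper's route is a bit shorter here, while yours avoids relying on the heavier dimension result of Proposition~\ref{prop:pre}(7).
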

\begin{proof}
Let $\pi:M^{n+1} \rightarrow M$ be the projection onto the last coordinate.
Set $Y=\{(x,t) \in X \times M\;|\; |a-x|=t\}$.
Since $\mathcal M$ is locally o-minimal, the intersection $]-\delta, \delta[ \cap \pi(Y)$ is a finite union of points and open intervals for a sufficiently small $\delta>0$.
Since the point $a$ belongs to the closure of $X$, the intersection $]-\delta, \delta[ \cap \pi(Y)$ contains an open interval of the form $]0,\varepsilon[$ for some $\varepsilon>0$.
There exists a definable map $\gamma:]0,\varepsilon[ \rightarrow X$ with $(\gamma(t),t) \in Y$ for all $0 < t < \varepsilon$ by Lemma \ref{lem:definable_choice}.
The set $D$ of points at which the definable function $\gamma$ is discontinuous is of dimension zero by Proposition \ref{prop:pre}(7).
We have $\inf D \in D$ by Lemma \ref{lem:discrete}.
In particular, we get $\inf D > 0$.
Taking a smaller $\varepsilon>0$ if necessary, we may assume that $\gamma$ is continuous.
The equality $\lim_{t \to +0}\gamma(t)=a$ is obvious by the definition of $\gamma$.
\end{proof}

The following two lemmas are used in the subsequent section.
They can be proved by using the definable choice lemma.

\begin{lemma}\label{lem:proj_dim}
Consider a DCULOAS structure $\mathcal M=(M,<,+,0,\ldots)$.
Let $C$ and $P$ be definable subsets of $M^m$ and $M^n$, respectively.
Let $X$ be a definable subset of $C \times P$.
The notation $\pi:M^{m+n} \rightarrow M^n$ denotes the projection onto the last $n$ coordinates.
Assume that $\dim \pi(X)=\dim P$.
Then there exists a point $(c,p) \in X$ such that $\dim \pi(X \cap W) = \dim P$ for all open boxes $W$ in $M^{m+n}$ containing the point $(c,p)$.
\end{lemma}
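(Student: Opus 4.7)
The plan is to combine definable choice with the dimension-theoretic facts in Proposition~\ref{prop:pre}: use the section furnished by Lemma~\ref{lem:definable_choice} to transfer a ``locally full dimension'' point in $\pi(X)$ up to a point in $X$, controlling the transfer with the fact that definable maps are continuous off a low-dimensional set.

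First I would invoke Lemma~\ref{lem:definable_choice} directly (the projection $\pi$ already goes to the last $n$ coordinates) to obtain a definable map $\varphi : \pi(X) \to X$ with $\pi \circ \varphi = \mathrm{id}_{\pi(X)}$. Write $\varphi(q) = (\psi(q), q)$ with $\psi : \pi(X) \to M^m$ definable. Each component of $\psi$ is a definable $M$-valued function, so by Proposition~\ref{prop:pre}(7) together with Proposition~\ref{prop:pre}(5), the discontinuity locus $D \subset \pi(X)$ of $\varphi$ satisfies $\dim D < \dim \pi(X) = \dim P$; hence $Y := \pi(X) \setminus D$ has $\dim Y = \dim P$ by Proposition~\ref{prop:pre}(1), (5).

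Next, apply Proposition~\ref{prop:pre}(4) to the nonempty definable set $Y$ to obtain a point $p^* \in Y$ such that $\dim(Y \cap U) = \dim P$ for every open box $U \subset M^n$ containing $p^*$. Set $(c^*, p^*) := \varphi(p^*) \in X$. Given an arbitrary open box $W = W_1 \times W_2 \subset M^m \times M^n$ containing $(c^*, p^*)$, continuity of $\varphi$ at $p^* \in Y$ yields an open box $U \subset W_2$ with $p^* \in U$ such that $\varphi(Y \cap U) \subset W$. For each $q \in Y \cap U$ the point $\varphi(q)$ lies in $X \cap W$ and projects to $q$, so $Y \cap U \subset \pi(X \cap W)$; therefore $\dim \pi(X \cap W) \geq \dim(Y \cap U) = \dim P$, and the reverse inequality is immediate from $\pi(X \cap W) \subset \pi(X)$ together with Proposition~\ref{prop:pre}(1).

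The subtlety is that a bare application of Proposition~\ref{prop:pre}(4) inside $\pi(X)$ only controls dimensions downstairs in $P$: a wildly discontinuous section could, for parameters $q$ arbitrarily close to $p^*$, place $\varphi(q)$ far from the chosen $c^*$, breaking the inclusion $Y \cap U \subset \pi(X \cap W)$. Passing to the continuity locus of the chosen section before selecting the base point is what bridges the upstairs and downstairs pictures.
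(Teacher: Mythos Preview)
Your argument is correct and follows essentially the same route as the paper: apply definable choice to get a section $\varphi$, remove its low-dimensional discontinuity locus via Proposition~\ref{prop:pre}(7), pick a locally full-dimensional base point in the remainder via Proposition~\ref{prop:pre}(4), and use continuity of the section at that point to push an open box upstairs. The only cosmetic difference is that the paper removes the \emph{closure} of the discontinuity set (invoking Proposition~\ref{prop:pre}(6) as well), whereas you work directly with the discontinuity set itself; since you only need continuity of $\varphi$ at the single point $p^*$, your streamlined version is perfectly adequate.
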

\begin{proof}
We can find a definable map $\tau:\pi(X) \rightarrow X$ such that the composition $\pi \circ \tau$ is the identity map on $\pi(X)$ by Lemma \ref{lem:definable_choice}.
Let $D$ be the closure of the set of points at which $\tau$ is discontinuous.
We have $\dim D < \dim \pi(X)=\dim P$ by Proposition \ref{prop:pre}(5), (6), (7).
Set $E=\pi(X) \setminus D$.
We obtain $\dim E=\dim P$ by Proposition \ref{prop:pre}(5).
Therefore there exists a point $p \in E$ with $\dim (E \cap U)=\dim P$ for all open box $U$ in $M^n$ containing the point $p$ by Proposition \ref{prop:pre}(4).
Set $(c,p)=\tau(p)$.

We demonstrate that the point $(c,p)$ satisfies the condition in the lemma.
Take an arbitrary sufficiently small open box $W$ in $M^{m+n}$ containing the point $(c,p)$.
We may assume that $D \cap \pi(W) = \emptyset$ because $p \not\in D$ and $D$ is closed.
Since $\tau$ is continuous on $E$, the set $\tau^{-1}(W)=\pi(\tau(E) \cap W)$ is open in $E$.
There exists an open box $U$ in $R^n$ such that $p \in U$ and $E \cap U \subset \pi(\tau(E) \cap W)$.
Shrinking $U$ if necessary, we may assume that $U$ is contained in $\pi(W)$.
We have $\dim P=\dim E \cap U$ by the definition of the point $p$.
We then get $\dim P=\dim E \cap U \leq \dim \pi(\tau(E) \cap W) \leq \dim \pi(X \cap W) \leq \dim P$ by Proposition \ref{prop:pre}(1).
We have demonstrated the lemma.
\end{proof}

\begin{lemma}\label{lem:main}
Consider a DCULOAS structure $\mathcal M=(M,<,+,0,\ldots)$.
Let $C$ be a definable closed and bounded subset of $R^m$.
Let $\varphi, \psi: C \rightarrow M_{>0}$ be two definable functions.
Assume that the following condition is satisfied:
$$
\forall x \in C, \exists \delta>0, \forall x' \in C,\ |x'-x| < \delta \Rightarrow \varphi(x') \geq \psi(x) \text{.}
$$
Then we have $\inf \varphi(C)>0$.
\end{lemma}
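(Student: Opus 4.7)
The plan is to argue by contradiction. Suppose that $\inf\varphi(C)=0$; this infimum exists in $M$ by definable completeness, since $\varphi$ is bounded below by $0$. The idea is to produce a point $x_0\in C$ near which $\varphi$ takes values less than $\psi(x_0)$, directly contradicting the local hypothesis at $x_0$ (which supplies a positive lower bound $\psi(x_0)>0$ for $\varphi$ on a neighborhood of $x_0$).

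Fix some $s>0$ and consider the definable set
$$X=\{(x,t)\in C\times\,]0,s[\;:\;\varphi(x)<t\}.$$
The assumption $\inf\varphi(C)=0$ says that the projection of $X$ onto the last coordinate is all of $]0,s[$, so Lemma \ref{lem:definable_choice} yields a definable map $f:\,]0,s[\,\to C$ with $\varphi(f(t))<t$ for every $t$. By Proposition \ref{prop:pre}(7) applied componentwise, together with Proposition \ref{prop:pre}(5), the set $D$ of discontinuities of $f$ has dimension $<1$, i.e., dimension zero; then Lemma \ref{lem:discrete} shows $D$ is discrete and closed, and if nonempty satisfies $\inf D\in D\subset\,]0,s[$, so $\inf D>0$. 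Replacing $s$ by this infimum, I may assume $f$ is continuous on $]0,s[$. Since $C$ is bounded the map $f$ is bounded, so Corollary \ref{cor:limit} produces the limit $x_0:=\lim_{t\to +0}f(t)\in M^m$; closedness of $C$ then forces $x_0\in C$.

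Finally, I apply the hypothesis of the lemma at $x_0$: there exists $\delta>0$ such that $\varphi(x')\geq\psi(x_0)$ whenever $x'\in C$ satisfies $|x'-x_0|<\delta$. By the definition of the limit, for all sufficiently small $t>0$ one has both $|f(t)-x_0|<\delta$ and $t<\psi(x_0)$, whence $\psi(x_0)\leq\varphi(f(t))<t<\psi(x_0)$, a contradiction. The only points requiring care are the coverage of $]0,s[$ by the projection of $X$—which is exactly where the contradiction hypothesis $\inf\varphi(C)=0$ enters—and the non-accumulation of the discontinuities of $f$ at $0$, which is handled cleanly by Lemma \ref{lem:discrete}; no genuinely hard step is involved.
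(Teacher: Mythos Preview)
Your proof is correct and follows essentially the same strategy as the paper's: use definable choice to produce a curve in $C$ along which $\varphi$ approaches the infimum, make it continuous, pass to the limit via Corollary~\ref{cor:limit}, and contradict the hypothesis at the limiting point. The only cosmetic difference is that the paper selects points on the graph $\varphi(x)=t$ (after a preliminary case split using local o-minimality of $\varphi(C)$ near $l$), whereas you select from the sublevel sets $\varphi(x)<t$, which sidesteps that case distinction.
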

\begin{proof}
Set $l=\inf\varphi(C) \geq 0$, which exists by the definably completeness of $\mathcal M$.
We have only to show that $l>0$.
Since $\mathcal M$ is locally o-minimal, we have $l \in \varphi(C)$ or there exists $u \in M$ with $l < u$ and $]l,u[ \subset \varphi(C)$.
It is obvious that $l>0$ in the former case.
We consider the latter case in the rest of the proof.

Let $\Gamma$ be the graph of the function $\varphi$.
Let $\pi_1:M^{m+1} \rightarrow M^m$ and $\pi_2:M^{m+1} \rightarrow M$ be the projections onto the first $m$ coordinates and onto the last coordinate, respectively.
We can take a definable map $\eta:]l,u[ \rightarrow \Gamma$ such that the composition $\pi_2 \circ \eta$ is the identity map on $]l,u[$ by Lemma \ref{lem:definable_choice}.
Note that the map $\eta$ is bounded because the domain of definition $C$ of $\varphi$ is bounded and the interval $]l,u[$ is bounded.
Since the set of points at which $\eta$ is discontinuous is at most of dimension zero by Proposition \ref{prop:pre}(7), we may assume that $\eta$ is continuous by taking a smaller $u$ if necessary.

Set $z=\lim_{t \to l+}\eta(t)$, which uniquely exists by Corollary \ref{cor:limit}.
We have $\pi_2(z)=l$ by the definition of $\eta$.
Set $c=\pi_1(z)$.
It belongs to $C$ because $C$ is bounded and closed.
For any $t>l$ sufficiently close to $l$, $\pi_1(\eta(t)) \in C$ is close to the point $c$.
We have $\pi_2(\eta(t)) \geq \psi(c)$ for such $t$ by the assumption.
We finally obtain $l=\lim_{t \to l+}\pi_2(\eta(t))) \geq \psi(c)>0$.
\end{proof}

\section{Properties of definable functions}\label{sec:property}
We investigate the properties of functions definable in a DCULOAS structure.
\begin{definition}
Consider an expansion of a densely linearly ordered abelian group $\mathcal M=(M,<,+,0,\ldots)$.
Let $C$ and $P$ be definable sets.
Let $f: C \times P \rightarrow M$ be a definable function.
The function $f$ is \textit{equi-continuous} with respect to $P$ if the following condition is satisfied:
$$
\forall \varepsilon>0, \ \forall x \in C, \ \exists \delta >0, \ \forall p \in P, \ \forall x' \in C,\ \  |x-x'|< \delta \Rightarrow |f(x,p)-f(x',p)|< \varepsilon\text{.}
$$
The function $f$ is \textit{uniformly equi-continuous} with respect to $P$ if the following condition is satisfied:
$$
\forall \varepsilon>0, \ \exists \delta >0, \ \forall p \in P, \ \forall x, x' \in C,\ \  |x-x'|< \delta \Rightarrow |f(x,p)-f(x',p)|< \varepsilon\text{.}
$$

The function $f$ is \textit{pointwise bounded} with respect to $P$ if the following condition is satisfied:
$$
\forall x \in C,\  \exists N>0, \  \forall p \in P,\ \   |f(x,p)|<N\text{.}
$$
\end{definition}

\begin{proposition}\label{prop:equi-cont}
Consider a DCULOAS structure $\mathcal M=(M,<,+,0,\ldots)$.
Let $C$ and $P$ be definable sets.
Let $f: C \times P \rightarrow M$ be a definable function.
Assume that $C$ is closed and bounded.
Then $f$ is equi-continuous with respect to $P$ if and only if it is uniformly equi-continuous with respect to $P$.
\end{proposition}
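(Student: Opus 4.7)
The reverse direction is immediate from the definitions. For the forward direction, the plan is to reduce to Lemma \ref{lem:main} by manufacturing a pair of positive definable ``local radius of $\varepsilon$-equi-continuity'' functions on $C$.

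Fix $\varepsilon > 0$ and, for $\eta > 0$, set
$$
S_\eta(x) = \{r > 0 \,:\, \forall x' \in C,\ \forall p \in P,\ |x - x'| < r \Rightarrow |f(x,p) - f(x',p)| < \eta\}.
$$
Define $\varphi(x) = \min\bigl(1,\sup S_\varepsilon(x)\bigr)$ and $\psi(x) = \tfrac{1}{2}\min\bigl(1,\sup S_{\varepsilon/2}(x)\bigr)$. By definable completeness the suprema exist (with the convention $\sup = +\infty$ if $S_\eta(x)$ is unbounded, which is harmless thanks to the cut-off at $1$), so $\varphi$ and $\psi$ are definable. Equi-continuity with tolerance $\varepsilon$, respectively $\varepsilon/2$, ensures $\varphi(x), \psi(x) > 0$ for every $x \in C$.

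The crux is checking the hypothesis of Lemma \ref{lem:main}: for each $x \in C$ and each $x' \in C$ with $|x - x'| < \psi(x)$, one has $\varphi(x') \geq \psi(x)$. Given such $x'$, pick any $r$ with $0 < r \leq \psi(x)$ and any $x'' \in C$ with $|x' - x''| < r$. The triangle inequality gives $|x - x''| < \psi(x) + r \leq 2\psi(x) \leq \sup S_{\varepsilon/2}(x)$, so there is $r^* \in S_{\varepsilon/2}(x)$ strictly exceeding both $|x - x'|$ and $|x - x''|$. Applying the defining property of $r^*$ twice and the triangle inequality for $f$ yields $|f(x',p) - f(x'',p)| < \varepsilon$ for every $p \in P$; hence $r \in S_\varepsilon(x')$. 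Thus $\sup S_\varepsilon(x') \geq \psi(x)$, and since $\psi(x) \leq \tfrac{1}{2}$ we conclude $\varphi(x') \geq \psi(x)$ as desired.

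Lemma \ref{lem:main} then delivers $\delta_0 := \inf \varphi(C) > 0$. For any $x, x' \in C$ with $|x - x'| < \delta_0$ and any $p \in P$, the inequality $|x - x'| < \delta_0 \leq \varphi(x) \leq \sup S_\varepsilon(x)$ supplies some $r^* \in S_\varepsilon(x)$ with $|x - x'| < r^*$, and hence $|f(x,p) - f(x',p)| < \varepsilon$. This is uniform equi-continuity with respect to $P$. The one delicate point I expect to carry out carefully is the strict-versus-non-strict inequality bookkeeping around the suprema of $S_\eta$ --- the reason both a factor $\tfrac{1}{2}$ and a drop in tolerance from $\varepsilon$ to $\varepsilon/2$ appear in the definition of $\psi$. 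All the substantive analytic content is already packaged in Lemma \ref{lem:main}.
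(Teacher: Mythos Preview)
Your proof is correct and follows essentially the same route as the paper: define a definable ``radius of $\varepsilon$-equi-continuity'' function, show via a triangle-inequality argument that it dominates (up to the $\tfrac12$ and $\varepsilon/2$ adjustments) its own shifted version, and feed this into Lemma~\ref{lem:main}. The only cosmetic wrinkle is your use of the cut-off ``$1$'': since $\mathcal M$ is merely an ordered abelian group there is no distinguished unit, so you should fix an arbitrary positive $c\in M$ and write $\min(c,\cdot)$ instead---exactly as the paper does by bounding the supremum by $c$ from the outset.
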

\begin{proof}
A uniformly equi-continuous definable function is always equi-continuous.
We prove the opposite implication.

Take a positive $c \in M$.
Consider the definable function $\varphi:C \times M_{>0} \rightarrow M_{>0}$ given by 
$$ \varphi(x,\varepsilon)=\sup\{0<\delta<c\;|\;\forall p \in P, \ \forall x' \in C,\  |x-x'|< \delta \Rightarrow |f(x,p)-f(x',p)|< \varepsilon\}\text{.}$$
Since $f$ is equi-continuous with respect to $P$, we have $\varphi(x,\varepsilon)>0$ for all $x \in C$ and $\varepsilon>0$.
Fix arbitrary $x \in C$ and $\varepsilon>0$.
We also fix an arbitrary point $x' \in C$ with $|x'-x|<\frac{1}{2}\varphi(x,\frac{\varepsilon}{2})$.
We have $|f(x',p)-f(x,p)|<\frac{\varepsilon}{2}$ by the definition of $\varphi$.

For all $y \in C$ with $|x'-y|<\frac{1}{2}\varphi(x,\frac{\varepsilon}{2})$, we have $|x-y| \leq |x-x'|+|x'-y| < \varphi(x,\frac{\varepsilon}{2})$.
We get $|f(y,p)-f(x,p)|<\frac{\varepsilon}{2}$ by the definition of $\varphi$.
We finally obtain $|f(y,p)-f(x',p)| \leq |f(x',p)-f(x,p)|+|f(y,p)-f(x,p)|<\varepsilon$.
It means that $\varphi(x',\varepsilon) \geq \frac{1}{2}\varphi(x,\frac{\varepsilon}{2})$ whenever $|x'-x|<\frac{1}{2}\varphi(x,\frac{\varepsilon}{2})$.
Apply Lemma \ref{lem:main} to the definable functions $\varphi(\cdot,\varepsilon)$ and $\frac{1}{2}\varphi(\cdot,\frac{\varepsilon}{2})$ for a fixed $\varepsilon>0$.
We have $\inf \varphi(C,\varepsilon)>0$.

For any $\varepsilon>0$, set $\delta=\inf \varphi(C,\varepsilon)$.
For any $p \in P$ and $x,x' \in C$, we have $|f(x,p)-f(x',p)|< \varepsilon$ whenever $|x-x'|< \delta$ by the definition of $\varphi$.
It means that $f$ is uniformly equi-continuous.
\end{proof}

It is well known that a continuous function defined on a compact set is uniformly continuous.
The following corollary claims that a similar assertion holds true for a definable function defined on a definable closed bounded set.
\begin{corollary}\label{cor:uniform}
Consider a DCULOAS structure $\mathcal M=(M,<,+,0,\ldots)$.
Let $C$ be a definable closed and bounded set.
A definable continuous function $f: C \rightarrow M$ is uniformly continuous.
\end{corollary}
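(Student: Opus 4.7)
The plan is to deduce this as a direct special case of Proposition \ref{prop:equi-cont} by taking the parameter space $P$ to be a single point. Concretely, I would fix any element $p_0 \in M$, set $P = \{p_0\}$ (which is a definable subset of $M$), and define the auxiliary definable function $\tilde f : C \times P \to M$ by $\tilde f(x,p_0) = f(x)$.

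The next step is to verify that, with this choice of $P$, the two notions from the definition above collapse to the right statements. Since $P$ has exactly one element, the quantifier ``$\forall p \in P$'' is vacuous, so equi-continuity of $\tilde f$ with respect to $P$ reads
\[
\forall \varepsilon>0, \ \forall x \in C, \ \exists \delta >0, \ \forall x' \in C,\ |x-x'|< \delta \Rightarrow |f(x)-f(x')|< \varepsilon,
\]
which is exactly continuity of $f$ on $C$ (and holds by hypothesis). Similarly, uniform equi-continuity of $\tilde f$ with respect to $P$ unfolds to uniform continuity of $f$.

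Since $C$ is closed and bounded and $\tilde f$ is definable and equi-continuous with respect to $P$, Proposition \ref{prop:equi-cont} applies and yields that $\tilde f$ is uniformly equi-continuous with respect to $P$, which by the above translation means $f$ is uniformly continuous. There is no real obstacle here, just the bookkeeping of checking that a singleton parameter space is definable and that the two conditions specialize correctly; the genuine work has already been done inside Proposition \ref{prop:equi-cont} (and, behind it, in Lemma \ref{lem:main}).
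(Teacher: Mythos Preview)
Your proposal is correct and follows exactly the same approach as the paper: the paper also takes $P$ to be a singleton, applies Proposition~\ref{prop:equi-cont} to the function $g(x,p)=f(x)$, and concludes immediately.
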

\begin{proof}
Let $P$ be a singleton.
Apply Proposition \ref{prop:equi-cont} to the function $g:C \times P \rightarrow M$ defined by $g(x,p)=f(x)$.
\end{proof}

We define a definable family of functions and investigate its properties.
Equi-continuity, convergence and uniform convergence are defined for sequences of functions in classical analysis.
We consider similar notions for a definable family of functions.
\begin{definition}
Consider an expansion of a densely linearly ordered abelian group $\mathcal M=(M,<,+,0,\ldots)$.
Let $C$ be a definable set and $s$ be a positive element in $M$.
A family $\{f_t:C \rightarrow M\}_{0<t<s}$ of functions with the parameter variable $t$ is a \textit{definable family of functions} if there exists a definable function $F:C \times ]0,s[ \rightarrow M$ such that $f_t(x)=F(x,t)$ for all $x \in C$ and $0<t<s$.
We call it a \textit{definable family of continuous functions} if every function $f_t$ is continuous.

Consider a definable family of functions $\{f_t:C \rightarrow M\}_{0<t<s}$.
Set $I=]0,s[$.
The map $F:C \times I \rightarrow M$ given by $F(x,t) =f_t(x)$ is a definable function by the definition.
The family is a \textit{definable family of equi-continuous functions} if $F$ is equi-continuous with respect to $I$.
It is a \textit{definable family of pointwise bounded functions} if $F$ is pointwise bounded with respect to $I$.

A definable family of functions $\{f_t:C \rightarrow M\}_{0<t<s}$ is \textit{pointwise convergent} if for all positive $\varepsilon>0$ and for all $x \in C$, there exists $s'>0$ such that $|f_t(x)-f_{t'}(x)|<\varepsilon$ for all $t,t' \in ]0,s'[$.
\end{definition}

The following lemma is proved following a typical argument in classical analysis.
\begin{lemma}\label{lem:pointwise}
Consider an expansion of a densely linearly ordered abelian group $\mathcal M=(M,<,+,0,\ldots)$.
Let $C$ be a definable set and $s$ be a positive element in $M$.
Consider a pointwise convergent definable family of functions $\{f_t:C \rightarrow M\}_{0<t<s}$.
For any $x \in C$, there exists $s'>0$ such that the set $\{f_t(x)\;|\;0<t<s'\}$ is bounded.
\end{lemma}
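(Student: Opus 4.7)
The plan is to mimic the classical proof that every Cauchy sequence is bounded, using the triangle inequality together with the pointwise convergence hypothesis applied to a single fixed $\varepsilon$.

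Fix $x \in C$. Apply the definition of pointwise convergence to this $x$ with $\varepsilon = 1$ (note that $1$ here can be replaced by any fixed positive element of $M$, since $\mathcal M$ expands a densely linearly ordered abelian group and such an element exists; alternatively, use the constant $c$ coming from the ambient group). We obtain some $s' > 0$, with $s' \leq s$, such that
\[
|f_t(x) - f_{t'}(x)| < 1 \quad \text{for all } t, t' \in {]0, s'[}.
\]
Next I would pick any specific $t_0 \in {]0, s'[}$ (which exists because the interval is nonempty, $\mathcal M$ being densely ordered). For every $t \in {]0, s'[}$, the triangle inequality gives
\[
|f_t(x)| \leq |f_t(x) - f_{t_0}(x)| + |f_{t_0}(x)| < 1 + |f_{t_0}(x)|.
\]
This exhibits a single bound $N := 1 + |f_{t_0}(x)| \in M$ valid for the whole set $\{f_t(x) \mid 0 < t < s'\}$, which is exactly the required conclusion.

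There is no real obstacle: the argument is entirely formal and uses only the triangle inequality and the definition of pointwise convergence. The only point requiring a brief remark is that the choice of $\varepsilon = 1$ and of an interior point $t_0$ is legitimate in this abstract ordered-group setting — for $\varepsilon$ one uses any positive element of $M$ (such elements exist because the order is dense and nontrivial), and for $t_0$ one uses density to pick an element of ${]0,s'[}$.
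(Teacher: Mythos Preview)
Your proof is correct and essentially identical to the paper's: both fix $x$, apply pointwise convergence at a single positive $\varepsilon$ to obtain $s'$, pick a reference point in $]0,s'[$, and use the triangle inequality to bound $|f_t(x)|$. The only cosmetic difference is that the paper leaves $\varepsilon$ generic while you specialize to a fixed positive element, which you rightly note is all that is needed.
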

\begin{proof}
Fix $x \in C$.
Take a positive $\varepsilon >0$.
There exists $s'>0$ such that$|f_t(x)-f_{t'}(x)|<\varepsilon$ for all $t,t' \in ]0,s'[$.
Fix $u \in ]0,s'[$.
For any $t \in ]0,s'[$, we have $|f_t(x)| \leq |f_u(x)|+|f_u(x)-f_t(x)| < |f_u(x)|+ \varepsilon$.
It means that the set $\{f_t(x)\;|\;0<t<s'\}$ is bounded.
\end{proof}

We also get the following converse when $\mathcal M$ is a DCULOAS structure.
\begin{lemma}\label{lem:pointwise2}
Let $\mathcal M=(M,<,+,0,\ldots)$ be a DCULOAS structure.
Let $C$ be a definable set and $s$ be a positive element in $M$.
A definable family of pointwise bounded functions $\{f_t:C \rightarrow M\}_{0<t<s}$ is pointwise convergent.
\end{lemma}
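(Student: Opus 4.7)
The plan is to reduce the statement to Corollary \ref{cor:limit}. Fix an arbitrary $x \in C$ and let $F : C \times {]0,s[} \to M$ be the definable function underlying the family, so that $F(x, \cdot) : {]0,s[} \to M$ is a definable function of one variable. The pointwise boundedness hypothesis says precisely that $F(x,\cdot)$ takes values in a bounded subset of $M$ (the constant $N$ depends only on $x$), hence $F(x,\cdot)$ is a bounded definable map from ${]0,s[}$ to $M = M^1$.

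Applying Corollary \ref{cor:limit} (in the case $n = 1$) to $F(x,\cdot)$ yields a unique point $L(x) \in M$ with the property that for every $\varepsilon > 0$ there exists $\delta > 0$ such that $|L(x) - F(x,t)| < \varepsilon/2$ whenever $0 < t < \delta$. Note that the existence of this one-sided limit is exactly where the assumption that $\mathcal M$ is a DCULOAS structure (as opposed to merely a densely linearly ordered abelian group expansion) is used, since Corollary \ref{cor:limit} depends on the monotonicity theorem and definable completeness.

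Given $\varepsilon > 0$, choose $\delta > 0$ as above and set $s' = \delta$. For any $t, t' \in {]0, s'[}$, the triangle inequality gives
\[
|f_t(x) - f_{t'}(x)| = |F(x,t) - F(x,t')| \leq |L(x) - F(x,t)| + |L(x) - F(x,t')| < \varepsilon,
\]
which is exactly the defining condition for pointwise convergence of the family at the point $x$. Since $x \in C$ was arbitrary, this completes the proof.

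There is no real obstacle here: the content is entirely contained in Corollary \ref{cor:limit}, and the argument is the standard deduction that Cauchy-type convergence follows from the existence of a limit via the triangle inequality. The only subtlety is recognizing that pointwise boundedness is exactly the hypothesis needed to invoke Corollary \ref{cor:limit} one point at a time.
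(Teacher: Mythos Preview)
Your proof is correct and follows essentially the same approach as the paper: fix $x\in C$, apply Corollary~\ref{cor:limit} to the bounded definable function $t\mapsto f_t(x)$ to obtain a limit, and then use the triangle inequality with an $\varepsilon/2$ split to verify the pointwise convergence condition.
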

\begin{proof}
Fix $x \in C$.
Set $I=]0,s[$.
Consider the definable function $g:I \rightarrow M$ given by $g(t)=f_t(x)$.
It is bounded.
There exists a limit $y=\lim_{t \to +0}g(t)$ by Corollary \ref{cor:limit}.

Take a positive $\varepsilon >0$.
There exists $s'>0$ such that$|y-g(t)|<\varepsilon/2$ for all $t\in ]0,s'[$.
We have $|f_t(x)-f_{t'}(x)| \leq |f_t(x)-y|+|y-f_{t'}(x)|<\varepsilon$ whenever $t, t'\in ]0,s'[$.
It means that the family $\{f_t:C \rightarrow M\}_{0<t<s}$ is pointwise convergent.
\end{proof}

We define the limit of a pointwise convergent definable family of functions.
\begin{definition}
Consider a DCULOAS structure $\mathcal M=(M,<,+,0,\ldots)$.
Let $C$ be a definable set and $s$ be a positive element in $M$.
Consider a pointwise convergent definable family of functions $\{f_t:C \rightarrow M\}_{0<t<s}$.
For any $x \in C$, consider the function $g_x:]0,s[ \rightarrow M$ given by $g_x(t)=f_t(x)$.
Taking a smaller $s>0$ if necessary, we may assume that $g_x$ is bounded by Lemma \ref{lem:pointwise}.
There exists a unique limit $\lim_{t \to +0} g_x(t)$ exists by Corollary \ref{cor:limit}.
The \textit{limit} $\lim_{t \to +0}f_t:C \rightarrow M$ of the family $\{f_t:C \rightarrow M\}_{0<t<s}$ is defined by $(\lim_{t \to +0}f_t)(x)=\lim_{t \to +0}g_x(t)$.
\end{definition}

\begin{definition}
Consider an expansion of a densely linearly ordered abelian group $\mathcal M=(M,<,+,0,\ldots)$.
Let $C$ be a definable set and $s$ be a positive element in $M$.
A definable family of functions $\{f_t:C \rightarrow M\}_{0<t<s}$ is \textit{uniformly convergent} if for all positive $\varepsilon>0$, there exists $s'>0$ such that $|f_t(x)-f_{t'}(x)|<\varepsilon$ for all $x \in C$ and $t,t' \in ]0,s'[$.
\end{definition}

The following proposition and its proof is almost the same as the counterparts in classical analysis.
\begin{proposition}\label{prop:uniform_convergent}
Consider a DCULOAS structure $\mathcal M=(M,<,+,0,\ldots)$.
Let $C$ be a definable set and $s$ be a positive element in $M$.
Consider a uniformly convergent definable family of continuous functions $\{f_t:C \rightarrow M\}_{0<t<s}$.
The limit $\lim_{t \to +0}f_t:C \rightarrow M$ is continuous.
\end{proposition}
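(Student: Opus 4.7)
The plan is to adapt the classical $3\varepsilon$ argument to the DCULOAS setting. First, I would note that a uniformly convergent definable family is a fortiori pointwise convergent, so the pointwise limit $f := \lim_{t \to +0}f_t$ is well-defined via the preceding definition (which in turn relies on Corollary \ref{cor:limit}).

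Next, I would fix $x_0 \in C$ and $\varepsilon > 0$. Using uniform convergence, I would obtain $s' \in\ ]0,s[$ with $|f_t(x) - f_{t'}(x)| < \varepsilon/3$ for all $x \in C$ and all $t, t' \in\ ]0,s'[$. The first key step is to upgrade this uniform Cauchy estimate to a uniform bound on the distance to the limit, namely $|f_t(x) - f(x)| \leq \varepsilon/3$ for all $x \in C$ and $t \in\ ]0,s'[$. I would verify this pointwise by contradiction: if $|f_t(x) - f(x)| > \varepsilon/3$ at some $x$, then, choosing $t' \in\ ]0,s'[$ small enough that $|f(x) - f_{t'}(x)|$ is less than $|f_t(x) - f(x)| - \varepsilon/3$ (which is possible by Corollary \ref{cor:limit}) and applying the triangle inequality, one arrives at a contradiction.

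After that, I would fix any $t_0 \in\ ]0,s'[$ and invoke continuity of $f_{t_0}$ at $x_0$ to obtain $\delta > 0$ with $|f_{t_0}(x) - f_{t_0}(x_0)| < \varepsilon/3$ whenever $|x - x_0| < \delta$. The standard three-term triangle inequality
\[
|f(x) - f(x_0)| \leq |f(x) - f_{t_0}(x)| + |f_{t_0}(x) - f_{t_0}(x_0)| + |f_{t_0}(x_0) - f(x_0)|
\]
combined with the two uniform bounds then yields $|f(x) - f(x_0)| < \varepsilon$ for $|x - x_0| < \delta$, establishing continuity of $f$ at $x_0$.

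The only mild obstacle is the limit-passing step in the second paragraph, since the notion of limit here is governed by Corollary \ref{cor:limit} rather than by Archimedean completeness of the reals. Passing from a strict inequality valid uniformly in $t'$ to a weak inequality for the limit is, however, a routine order-theoretic consequence of the $\varepsilon$-$\delta$ characterization in that corollary, and requires no further tame-geometric input beyond what is already available in the paper.
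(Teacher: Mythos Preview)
Your proof is correct and follows essentially the same classical $\varepsilon$-argument as the paper. The only difference is cosmetic: you first pass to the limit in the uniform Cauchy estimate to obtain $|f_t(x)-f(x)|\le\varepsilon/3$ and then use a three-term triangle inequality, whereas the paper avoids that limit-passage step by choosing, for each pair $x,x'$, auxiliary parameters $t_1,t_2$ with $|f(x)-f_{t_1}(x)|<\varepsilon/5$ and $|f(x')-f_{t_2}(x')|<\varepsilon/5$, and then uses a five-term chain with $\varepsilon/5$ in place of $\varepsilon/3$.
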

\begin{proof}
Fix arbitrary $\varepsilon>0$ and $x \in C$.
Since the family is uniformly convergent, we may assume that $|f_t(x')-f_{t'}(x')|<\frac{\varepsilon}{5}$ for all $x' \in C$ and $t,t' \in ]0,s[$ by taking a smaller $s>0$ if necessary.
Fix $t_0$ with $0<t_0<s$.
There exists $\delta>0$ such that $|f_{t_0}(x')-f_{t_0}(x)|<\frac{\varepsilon}{5}$ whenever $|x-x'|<\delta$ because $f_{t_0}$ is continuous.
Fix a point $x' \in C$ with $|x-x'|<\delta$.
We can take $t_1,t_2 \in ]0,s[$ with $|(\lim_{t \to +0}f_t)(x)-f_{t_1}(x)|<\frac{\varepsilon}{5}$ and $|(\lim_{t \to +0}f_t)(x')-f_{t_2}(x')|<\frac{\varepsilon}{5}$ by the definition of the limit $\lim_{t \to +0}f_t$.
We finally have $|(\lim_{t \to +0}f_t)(x')-(\lim_{t \to +0}f_t)(x)| \leq |(\lim_{t \to +0}f_t)(x')-f_{t_2}(x')|+|f_{t_2}(x')-f_{t_0}(x')|+|f_{t_0}(x')-f_{t_0}(x)|+|f_{t_0}(x)-f_{t_1}(x)|+|f_{t_1}(x)-(\lim_{t \to +0}f_t)(x)|<\varepsilon$.
We have proven that $\lim_{t \to +0}f_t$ is continuous.
\end{proof}

The following Arzela-Ascoli-type theorem is a main theorem of this paper.

\begin{theorem}\label{thm:ascoli}
Consider a DCULOAS structure $\mathcal M=(M,<,+,0,\ldots)$.
Let $C$ be a definable closed and bounded set.
A pointwise convergent definable family of equi-continuous functions $\{f_t:C \rightarrow M\}_{0<t<s}$ is uniformly convergent. 
\end{theorem}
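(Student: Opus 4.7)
The plan is to adapt the classical Arzela-Ascoli argument, replacing compactness plus a finite subcover by the combination of Proposition \ref{prop:equi-cont} (which upgrades equi-continuity to uniform equi-continuity on closed bounded definable sets) and Lemma \ref{lem:main} (which plays the role of the finite cover step in the definable setting).

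First, I would fix $\varepsilon > 0$ and apply Proposition \ref{prop:equi-cont} to the defining function $F : C \times ]0,s[ \to M$ to obtain a uniform $\delta > 0$ with
$$ |f_t(x) - f_t(x')| < \varepsilon/3 \quad \text{whenever } x, x' \in C,\ |x - x'| < \delta,\ t \in \,]0,s[. $$
Fixing a positive $c \in M$, I would then introduce two definable functions $\varphi, \psi : C \to M_{>0}$ by
$$ \psi(x) = \sup\{0 < u \leq c : \forall\, t, t' \in \,]0,u[,\ |f_t(x) - f_{t'}(x)| < \varepsilon/3\}, $$
$$ \varphi(x) = \sup\{0 < u \leq c : \forall\, t, t' \in \,]0,u[,\ |f_t(x) - f_{t'}(x)| < \varepsilon\}. $$
Both suprema exist by definable completeness, and pointwise convergence of the family at $x$ guarantees $\psi(x) > 0$, whence $\varphi(x) \geq \psi(x) > 0$.

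The crucial estimate is a direct triangle inequality: for $x' \in C$ with $|x - x'| < \delta$ and any $t, t' \in \,]0, \psi(x)[$,
$$ |f_t(x') - f_{t'}(x')| \leq |f_t(x') - f_t(x)| + |f_t(x) - f_{t'}(x)| + |f_{t'}(x) - f_{t'}(x')| < \varepsilon, $$
so $\varphi(x') \geq \psi(x)$. This is precisely the hypothesis of Lemma \ref{lem:main} (the uniform $\delta$ even gives the stronger uniform version), yielding $s' := \inf \varphi(C) > 0$. Since $s' \leq \varphi(x)$ for every $x \in C$, the supremum definition of $\varphi(x)$ forces $|f_t(x) - f_{t'}(x)| < \varepsilon$ for all $x \in C$ and all $t, t' \in \,]0, s'[$, which is exactly uniform convergence for the chosen $\varepsilon$.

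The main conceptual obstacle is to choose $\varphi$ and $\psi$ with different thresholds: a single threshold would yield only $\varphi(x') \geq \varphi(x)$, which is too weak to feed Lemma \ref{lem:main}. Splitting into $\varepsilon/3$ and $\varepsilon$ creates the asymmetry that makes the lemma applicable, after which the remainder of the argument is routine given the machinery already developed in Sections \ref{sec:definable_choice} and \ref{sec:property}.
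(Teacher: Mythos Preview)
Your proof is correct and follows essentially the same route as the paper: both define a definable function on $C$ measuring how small $t,t'$ must be to force the oscillation below a given threshold, use the $\varepsilon$ vs.\ $\varepsilon/3$ splitting together with equi-continuity to verify the hypothesis of Lemma~\ref{lem:main}, and conclude that the infimum is positive. The only cosmetic difference is that you first upgrade to \emph{uniform} equi-continuity via Proposition~\ref{prop:equi-cont}, whereas the paper feeds the pointwise $\delta'(x)$ from equi-continuity directly into Lemma~\ref{lem:main}; since that lemma only requires a $\delta$ depending on $x$, your extra step is harmless but unnecessary.
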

\begin{proof}
Set $I=]0,s[$.
Consider the map $F:C \times I \rightarrow M$ given by $F(x,t) =f_t(x)$.
It is an equi-continuous definable function with respect to $I$ by the definition.
Set $g=\lim_{t \to +0} f_t$.
It is well-defined because the family is pointwise convergent.

Take $c>0$.
Consider the definable function $\varphi:C \times M_{>0} \rightarrow M_{>0}$ given by
$$\varphi(x,\varepsilon)=\sup\{0<\delta<c\;|\; \forall t,t' \in ]0,\delta[,\ |F(x,t)-F(x,t')|<\varepsilon\}\text{.}$$
 We first show that it is well-defined.
 Fix $x \in C$ and $\varepsilon>0$.
 There exists $\delta>0$ such that $|F(x,u)-g(x)|<\frac{\varepsilon}{2}$ for all $u \in ]0,\delta[$ by the definition of $g$.
For any $t,t' \in  ]0,\delta[$, we have $|F(x,t)-F(x,t')| \leq |F(x,t)-g(x)|+|g(x)-F(x,t')|<\varepsilon$.
The definable set $\{0<\delta<c\;|\; \forall t,t' \in ]0,\delta[,\ |F(x,t)-F(x,t')|<\varepsilon\}$ is not empty and the function $\varphi$ is well-defined.

We fix $x \in C$ and $\varepsilon>0$ again.
Since $F$ is equi-continuous with respect to $I$, there exists $\delta'>0$ such that 
$$\forall t \in ]0,s[,\ \forall x' \in C, \ |x-x'|< \delta' \Rightarrow |F(x,t)-F(x',t)|<\frac{\varepsilon}{3}\text{.}$$
Fix arbitrary $x' \in C$ with $|x-x'|< \delta'$.
For any $t,t' \in ]0,\varphi(x, \frac{\varepsilon}{3})[$, we have $|F(x,t)-F(x,t')|<\frac{\varepsilon}{3}$ by the definition of $\varphi$.
We finally get $$|F(x',t)-F(x',t')| \leq |F(x',t)-F(x,t)|+|F(x,t)-F(x,t')|+|F(x,t')-F(x',t')|<\varepsilon$$
whenever $t,t' \in ]0,\varphi(x, \frac{\varepsilon}{3})[$.
It means that $\varphi(x',\varepsilon) \geq \varphi(x,\frac{\varepsilon}{3})$.
Apply Lemma \ref{lem:main} to the definable functions $\varphi(\cdot,\varepsilon)$ and $\varphi(\cdot,\frac{\varepsilon}{3})$ for a fixed $\varepsilon>0$.
We have $\inf \varphi(C,\varepsilon)>0$ for all $\varepsilon>0$.

Fix $\varepsilon>0$.
Set $\delta= \inf \varphi(C,\varepsilon)>0$.
We have $|f_t(x)-f_{t'}(x)|=|F(t,x)-F(t',x)|<\varepsilon$ for all $x \in C$ and $t,t' \in ]0,\delta[$.
It means that the family $\{f_t:C \rightarrow M\}_{0<t<s}$ is uniformly convergent. 
\end{proof}

The above theorem together with the curve selection lemma yields the following corollary:
\begin{corollary}\label{cor:ascoli}
Consider a DCULOAS structure $\mathcal M=(M,<,+,0,\ldots)$.
Let $C$ and $P$ be definable sets.
Assume that $C$ is closed and bounded.
Let $f:C \times P \rightarrow M$ be a definable function which is equi-continuous and pointwise bounded with respect to $P$.
Take $p \in \overline{P}$.
There exists a definable continuous curve $\gamma:]0,\varepsilon[ \rightarrow P$ such that $\lim_{t \to +0}\gamma(t)=p$ and the definable family of functions $\{g_t:C \rightarrow M\}_{0<t<\varepsilon}$ defined by $g_t(x)=f(x,\gamma(t))$ is uniformly convergent.
\end{corollary}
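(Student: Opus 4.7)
The plan is to reduce the statement to Theorem \ref{thm:ascoli} by manufacturing a suitable curve $\gamma$ and then checking that the restricted one-parameter family satisfies the hypotheses of that theorem. First I treat the trivial case $p \in P$: here I simply take $\gamma$ to be the constant map $\gamma(t)=p$ on any interval $]0,\varepsilon[$, and the resulting family $g_t(x)=f(x,p)$ is constant in $t$, hence trivially uniformly convergent.

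Next I handle the case $p \in \overline{P} \setminus P$. I apply the curve selection lemma (Corollary \ref{cor:curve_selection}) to the definable set $P$ and the point $p$ to obtain a small $\varepsilon>0$ and a definable continuous map $\gamma: ]0,\varepsilon[ \rightarrow P$ with $\lim_{t \to +0}\gamma(t)=p$. Setting $g_t(x) = f(x,\gamma(t))$, the map $G:C \times ]0,\varepsilon[ \to M$ given by $G(x,t)=g_t(x)$ is definable as a composition of definable maps, so this is a definable family of functions.

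It remains to verify the two hypotheses of Theorem \ref{thm:ascoli}. Equi-continuity of $G$ with respect to $]0,\varepsilon[$ is inherited directly from the equi-continuity of $f$ with respect to $P$: given $x \in C$ and $\eta>0$, choose $\delta>0$ with $|x-x'|<\delta \Rightarrow |f(x,q)-f(x',q)|<\eta$ for all $q \in P$, and then specialize $q=\gamma(t)$. For pointwise convergence, the pointwise boundedness of $f$ with respect to $P$ transfers along $\gamma$ to give a definable family of pointwise bounded functions $\{g_t\}$; Lemma \ref{lem:pointwise2} then guarantees that this family is pointwise convergent. Since $C$ is closed and bounded, Theorem \ref{thm:ascoli} applied to $\{g_t\}$ yields uniform convergence, completing the proof.

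I do not expect any serious obstacle here, as the substantive work is packaged in Corollary \ref{cor:curve_selection}, Lemma \ref{lem:pointwise2}, and Theorem \ref{thm:ascoli}. The only subtlety is that the curve selection lemma presumes the chosen point lies in the frontier, which is why the case $p \in P$ must be treated separately by a constant curve.
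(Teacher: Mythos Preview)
Your proposal is correct and follows essentially the same approach as the paper, which simply cites Corollary~\ref{cor:curve_selection}, Lemma~\ref{lem:pointwise2}, and Theorem~\ref{thm:ascoli}. Your explicit case split for $p\in P$ (using a constant curve) is a detail the paper elides, but it is a sensible addition given that Corollary~\ref{cor:curve_selection} is stated only for points in the frontier.
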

\begin{proof}
The corollary follows from Corollary \ref{cor:curve_selection}, Lemma \ref{lem:pointwise2} and Theorem \ref{thm:ascoli}.
\end{proof}

Consider a parameterized function $f:C \times P \rightarrow M$ which is equi-continuous with respect to $P$.
The following theorem claims that the projection image of the set at which $f$ is discontinuous onto the parameter space $P$ is of dimension smaller than $\dim P$ when $C$ is closed and bounded.
\begin{theorem}\label{thm:discont}
Consider a DCULOAS structure $\mathcal M=(M,<,+,0,\ldots)$.
Let $C$ be a definable closed and bounded set and $P$ be a definable set.
Let $\pi:C \times P \rightarrow P$ be the projection.
Consider a definable function $f:C \times P \rightarrow M$ which is equi-continuous with respect to $P$.
Set $D= \{(x,q) \in C \times P\;|\; f \text{ is discontinuous at }(x,q)\}$.
We have $\dim \pi(D) < \dim P$.
\end{theorem}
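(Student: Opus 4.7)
The plan is to argue by contradiction: suppose $\dim \pi(D) = \dim P$ and derive an impossibility. As a first reformulation, Proposition \ref{prop:equi-cont} shows that $f$ is uniformly equi-continuous with respect to $P$, after which a standard triangle-inequality argument yields
\[
D = \{(x, p) \in C \times P \;|\; f(x, \cdot) \text{ is discontinuous at } p\};
\]
in words, any joint discontinuity of $f$ forces the single-variable slice $f(x, \cdot)$ to be discontinuous at $p$.

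The central step is the extraction of a uniform ``oscillation lower bound''. For each positive $\varepsilon \in M$, introduce the definable set
\[
D_\varepsilon := \{(x, p) \in C \times P \;|\; \forall \delta > 0,\ \exists q, q' \in P,\ |q - p| < \delta,\ |q' - p| < \delta,\ |f(x, q) - f(x, q')| > \varepsilon\},
\]
so that $D = \bigcup_{\varepsilon > 0} D_\varepsilon$. Fix a positive $c \in M$ and introduce the bounded definable function $h : P \rightarrow M$ by
\[
h(p) := \sup\{\varepsilon \in M \;|\; 0 \leq \varepsilon \leq c \text{ and } \exists x \in C,\ (x, p) \in D_\varepsilon\};
\]
then $\{h > 0\} = \pi(D)$. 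By Proposition \ref{prop:pre}(7) the discontinuity locus of $h$ has dimension strictly less than $\dim P$, so by Proposition \ref{prop:pre}(5) the subset $E$ of $\pi(D)$ on which $h$ is continuous still has dimension $\dim P$. Apply Proposition \ref{prop:pre}(4) to $E$ to pick a point $p^*$ at which $E$ has full local dimension, and put $\varepsilon_0 := h(p^*)/2 > 0$. Continuity of $h$ at $p^*$ furnishes an open box $U$ around $p^*$ on which $h > \varepsilon_0$, and combined with the genericity of $p^*$ this yields $\dim \pi(D_{\varepsilon_0}) = \dim P$.

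Now apply Lemma \ref{lem:proj_dim} to $D_{\varepsilon_0}$ to select $(c_0, p_0) \in D_{\varepsilon_0}$ with $\dim \pi(D_{\varepsilon_0} \cap W) = \dim P$ for every open box $W$ containing $(c_0, p_0)$. Pick $\delta_0$ from uniform equi-continuity at tolerance $\varepsilon_0/3$, and shrink $W$ so that its $C$-diameter is smaller than $\delta_0$. For any $(x, p) \in D_{\varepsilon_0} \cap W$ one has $|x - c_0| < \delta_0$, and the witnesses of $(x, p) \in D_{\varepsilon_0}$ supply $q, q' \in P$ arbitrarily close to $p$ with
\[
|f(c_0, q) - f(c_0, q')| \geq |f(x, q) - f(x, q')| - 2 \cdot \frac{\varepsilon_0}{3} > \frac{\varepsilon_0}{3};
\]
hence $f(c_0, \cdot)$ is discontinuous at $p$. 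Thus $\pi(D_{\varepsilon_0} \cap W)$ is contained in the discontinuity set of the definable single-variable function $f(c_0, \cdot)$, which has dimension strictly less than $\dim P$ by Proposition \ref{prop:pre}(7), contradicting the choice of $(c_0, p_0)$.

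The main obstacle is the extraction of the uniform $\varepsilon_0$: without it, the oscillation of $f(x, \cdot)$ at $p$ could shrink to zero as $(x, p)$ varies over $D$ and the triangle-inequality step would collapse. The function $h$, together with Proposition \ref{prop:pre}(7) applied to $h$ and the local-dimension genericity provided by Proposition \ref{prop:pre}(4), is precisely the device that pins down a positive lower bound on the oscillation over a full-dimensional piece of $\pi(D)$.
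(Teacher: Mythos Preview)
Your argument is correct and takes a genuinely different route from the paper's proof.

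The paper proceeds in two stages. It first isolates the set $S$ of points $(x,p)\in D$ at which the fiber $D_p$ locally fills $C$, and shows $\dim\pi(S)<\dim P$ by a cell-decomposition argument (Lemma~\ref{lem:ato}): one finds a cell $E_{\max}\subset\tau(S)$ with $\dim\pi'(E_{\max})=\dim P$ that is locally isolated inside $P\times C$, so that the restriction of $f$ would be discontinuous everywhere on an open piece, contradicting Proposition~\ref{prop:pre}(7). In the second stage, on $T=D\setminus\pi^{-1}(\pi(S))$ every point $(c,p)$ admits a nearby point $(c_1,p)\notin D$, and uniform equi-continuity then \emph{transfers continuity} from $(c_1,p)$ back to $(c,p)$, a contradiction.

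Your approach is one-stage and dual in spirit: you filter $D$ by oscillation levels $D_\varepsilon$, use the auxiliary function $h$ together with Proposition~\ref{prop:pre}(4),(5),(7) to secure a single level $\varepsilon_0$ with $\dim\pi(D_{\varepsilon_0})=\dim P$, and then use uniform equi-continuity to \emph{transfer discontinuity} from nearby fibers to the fixed fiber $f(c_0,\cdot)$, forcing its discontinuity set to have dimension $\dim P$ in violation of Proposition~\ref{prop:pre}(7). This bypasses cell decomposition entirely: Lemma~\ref{lem:ato} is never invoked. In view of the Remark at the end of Section~\ref{sec:tietze}, which observes that Lemma~\ref{lem:ato} is the one ingredient tying Theorem~\ref{thm:discont} to the cell-decomposition framework, your argument in fact extends the theorem to the broader class of structures discussed there.

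One cosmetic point: as written, $h(p)$ is the supremum of a set that may be empty when $p\notin\pi(D)$; either restrict $h$ to $\pi(D)$ from the outset (Proposition~\ref{prop:pre}(7) then applies with $X=\pi(D)$), or adjoin $0$ to the set before taking the supremum. Nothing in the argument changes.
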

\begin{proof}
Let $C$ and $P$ be definable subsets of $M^m$ and $M^n$, respectively.
We first consider the set
$$
S=\{(x,p) \in D\;|\; \exists U \subset M^m:\text{open box with }x \in U \text{ and }C \cap U = D_p \cap U\}\text{,}$$
where the notation $D_p$ denotes the fiber $\{x \in C\;|\; (x,p) \in D\}$.
We first demonstrate that $\dim \pi(S) < \dim P$.

Assume the contrary.
There exists a point $(c,p) \in S$ such that $\dim \pi(S \cap W)=\dim P$ for all open box $W$ in $M^{m+n}$ containing the point $(c,p)$ by Lemma \ref{lem:proj_dim}.
Fix a sufficiently small open box $W$ containing the point $(c,p)$.
Let $\tau:C \times P \rightarrow P \times C$ be the map defined by $\tau(x,p)=(p,x)$ and $\pi':M^{m+n} \rightarrow M^n$ be the projection onto the first $n$ coordinates.
Shrinking $W$ if necessary, there exists a definable cell decomposition of $W'=\tau(W)$ partitioning the definable sets $W' \cap (P \times C)$ and $S'=\tau(S \cap W)$ by \cite[Theorem 4.2]{Fuji}.
There exists a cell $E$ contained in $S'$ with $\dim\pi'(E)=\dim P$ by the assumption.
Let $E_{\max}$ be a cell of the maximum dimension among such cells.

Let $E'$ be a cell such that $\pi'(E')=\pi'(E_{\max})$, $\dim E' > \dim E_{\max}$ and $\overline{E'} \cap E_{\max} \neq \emptyset$.
We show that $E' \cap (P \times C)$ is an empty set.
Assume the contrary.
The cell $E'$ is contained in $P \times C$ because it is a cell of the cell decomposition partitioning the set $W' \cap (P \times C)$.
Take $(p',c') \in \overline{E'} \cap E_{\max}$.
We obviously have $(p',c') \in P \times C$ because $E_{\max} \subset S' \subset P \times C$.
Since $(p',c')$ is an element of $S'$, there exists an open box $U'$ in $M^m$ containing the point $c'$ such that $C \cap U'=D_{p'} \cap U'$.
We can take a point $d \in C \cap U'$ with $(p',d) \in E'$ because $(p',c')   \in \overline{E'} \cap E_{\max}$.
Take an open box $V'$ in $M^m$ contained in $U'$ and containing the point $d$.
We obviously have $C \cap V'=D_{p'} \cap V'$.
It means that $(p',d) \in S'$.
The cell $E'$ is contained in $S'$ because the cell decomposition partitions the set $S'$.
We have $\dim E'>\dim E_{\max}$, $\dim \pi'(E')=\dim \pi'(E_{\max})=\dim P$ and $E' \subset S'$.
It is a contradiction to the definition of $E_{\max}$.
We have demonstrated that $E' \cap (P \times C)$ is an empty set.

We can take a point $(p_1,c_1) \in E_{\max}$ such that intersection $(P \times C) \cap (V_1 \times U_1)$ is contained in $E_{\max}$ for a sufficiently small open box $U_1$ in $M^m$ containing the point $c_1$ and a sufficiently small open box $V_1$ in $M^n$ containing the point $p_1$ by the previous claim and Lemma \ref{lem:ato}.
It means that $(C \times P) \cap (U_1 \times V_1)$ is contained in $S$.
Consider the restriction $g$ of $f$ to the set $(C \times P) \cap (U_1 \times V_1)$.
The set of points at which $g$ is discontinuous is $D \cap (U_1 \times V_1)$, and $g$ is discontinuous everywhere because $S$ is contained in $D$.
It contradicts to Proposition \ref{prop:pre}(7).
We have demonstrated that $\dim \pi(S) < \dim P$.

We next demonstrate that $\dim \pi(D)<\dim P$.
We lead to a contradiction assuming the contrary.
Set $T=D \setminus \pi^{-1}(\pi(S))$.
We have $\dim \pi(T)= \dim P$ by Proposition \ref{prop:pre}(5) because $\dim\pi(S)<\dim P$.
There exists a point $(c,p) \in T$ such that $\dim \pi(T \cap W)=\dim P$ for all open box $W$ in $M^{m+n}$ containing the point $(c,p)$ by Lemma \ref{lem:proj_dim}.
Fix an arbitrary $\varepsilon>0$.
Since $f$ is uniformly equi-continuous with respect to $P$ by the assumption and Proposition \ref{prop:equi-cont}, there exists $\delta>0$ satisfying the following condition:
\begin{equation}\label{eq:equi-cont1}
\forall q \in P, \ \forall x,x' \in C,\ |x-x'|<\delta \Rightarrow |f(x,q)-f(x',q)|<\varepsilon/3\text{.}
\end{equation}
Since $T \cap S=\emptyset$, there exists $c_1 \in C$ such that $|c-c_1|<\delta/2$ and $(c_1,p) \not\in D$.
There exists $\delta'>0$ such that 
\begin{equation}\label{eq:equi-cont2}
\forall q \in P,\ |q-p|<\delta' \Rightarrow |f(c_1,q)-f(c_1,p)|<\varepsilon/3
\end{equation}
because $f$ is continuous at $(c_1,p)$.

Consider an arbitrary point $(c',p') \in C \times P$ with $|c-c'|<\delta/2$ and $|p-p'|<\delta'$.
We have $|f(c_1,p)-f(c,p)|<\varepsilon/3$ by the inequality (\ref{eq:equi-cont1}) because $|c-c_1|<\delta/2$.
We also have $|f(c',p')-f(c_1,p')|<\varepsilon/3$ by (\ref{eq:equi-cont1}) because $|c'-c_1| \leq |c'-c|+|c-c_1|<\delta$.
We get 
\begin{align*}
|f(c',p')-f(c,p)| &\leq |f(c',p')-f(c_1,p')|+|f(c_1,p')-f(c_1,p)|+|f(c_1,p)-f(c,p)|\\
&<\varepsilon
\end{align*}
by the above inequalities together with the inequality (\ref{eq:equi-cont2}).
We have demonstrated that $f$ is continuous at $(c,p)$.
It is a contradiction to the condition that $(c,p) \in T \subset D$.
We have finished the proof of the theorem.
\end{proof}

\section{Definable Tietze extension theorem and o-minimality}\label{sec:tietze}

We treat the assertions satisfied in a DCULOAS structure in the previous sections.
We consider a slightly different type of problem in this section.
We consider whether a DCULOAS structure satisfying definable Tietze extension property is o-minimal or not.

\begin{definition}
A structure $\mathcal M=(M,\ldots)$ enjoys  \textit{definable Tietze extension property} if, for any positive integer $n$, any definable closed subset $A$ of $M^n$ and any continuous definable function $f:A \rightarrow M$, there exists a definable continuous extension $F:M^n \rightarrow M$ of $f$.
\end{definition}

We first prove the following lemma.

\begin{lemma}\label{lem:unbound0}
Consider a definably complete expansion of a densely linearly ordered abelian group  $\mathcal M=(M,<,+,0,\ldots)$.
If the structure $\mathcal M$ has a strictly monotone definable homeomorphism between a bounded open interval and an unbounded open interval, any two open intervals are definably homeomorphic and there exists a definable strictly increasing homeomorphism between them.
\end{lemma}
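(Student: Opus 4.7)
The plan is to show that every open interval of $M$ admits a definable strictly increasing homeomorphism onto $]0,+\infty[$; transitivity will then yield the lemma.

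First I would normalize the hypothesized homeomorphism. By composing with translations $x \mapsto x+a$ and with $x \mapsto -x$, and, in case the original image is $M$ itself, restricting the domain first so that the image becomes a one-sided half-line, I may assume I have a definable strictly increasing homeomorphism $h: ]0, r[ \rightarrow ]0, +\infty[$ for some $r > 0$.

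The central step is to construct, for every $L > 0$, a definable strictly increasing homeomorphism $\phi_L: ]0, L[ \rightarrow ]0, +\infty[$. For $0 < L < r$ I would take the restriction $h|_{]r-L, r[}: ]r-L, r[ \rightarrow ]h(r-L), +\infty[$ and pre/post-compose with the translations $x \mapsto x - (r-L)$ and $y \mapsto y - h(r-L)$. For $L \geq r$ I would first compress by applying $h^{-1}|_{]0, L[}: ]0, L[ \rightarrow ]0, h^{-1}(L)[$ (well-defined since $h^{-1}$ takes values in $]0, r[$); as $h^{-1}(L) < r$, I then compose with $\phi_{h^{-1}(L)}$ from the previous case.

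With $\phi_L$ in hand, each type of open interval reduces to $]0, +\infty[$: a bounded interval $]a, b[$ via the translation $x \mapsto x - a$ followed by $\phi_{b-a}$; a half-line $]c, +\infty[$ by translation; a half-line $]-\infty, d[$ via the map $x \mapsto -h(r-x)$, which is a definable strictly increasing homeomorphism $]0, r[ \rightarrow ]-\infty, 0[$, composed with a translation and with $h$; and the whole line $M$ by gluing $\Phi(x) = h^{-1}(x)$ for $x > 0$, $\Phi(0) = 0$, and $\Phi(x) = -h^{-1}(-x)$ for $x < 0$, which is a definable strictly increasing homeomorphism $M \rightarrow ]-r, r[$ (continuity at $0$ follows from $h^{-1}(y) \rightarrow 0$ as $y \rightarrow 0^+$), after which the bounded case applies. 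The chief obstacle is the absence of any scaling operation that could turn the fixed length $r$ into an arbitrary $L$; the detour through $h^{-1}$ when $L \geq r$ is exactly what overcomes this, and once all intervals are connected to $]0, +\infty[$, transitivity of ``being definably strictly increasingly homeomorphic'' completes the argument.
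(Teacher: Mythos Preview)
Your argument is correct and follows the same overall outline as the paper: normalize to a strictly increasing $h:]0,r[\to]0,+\infty[$, then reduce every open interval to $]0,+\infty[$. The details diverge in two places. For a bounded interval $]0,L[$ with $L>r$, the paper builds a \emph{piecewise} map (identity on $]0,L-r]$ glued to a shifted copy of $h$ on $]L-r,L[$), whereas you first compress $]0,L[$ into $]0,h^{-1}(L)[\subset]0,r[$ via $h^{-1}$ and then invoke the short-interval case. For the whole line $M$, the paper splits $]0,r[$ at its midpoint $r/2$ (appealing to divisibility of $(M,+)$, cf.\ \cite[Proposition~2.2]{M}) and sends each half to a half-line; you go the other direction, contracting $M$ onto $]-r,r[$ by the odd extension of $h^{-1}$ and then applying the bounded case. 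Your variants are slightly slicker in that they avoid piecewise gluing in the $L>r$ step and do not explicitly invoke divisibility for the $M$ step; the paper's piecewise map is perhaps more transparent. Either way the mechanisms are minor rearrangements of the same ingredients (translations, reflections, $h$, $h^{-1}$), so the two proofs are close cousins rather than genuinely different arguments.
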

\begin{proof}
By the assumption, there exists a strictly monotone definable homeomorphism $\varphi:I \rightarrow J$, where $I$ is a bounded open interval and $J$ is an unbounded open interval.
We may assume that $I=]0,u[$ for some $u>0$.
In fact, an open interval $]u_1,u_2[$ is obviously definably homeomorphic to $]0,u_2-u_1[$.
We may further assume that $\varphi$ is strictly increasing because the map $\tau:]0,u[ \rightarrow ]0,u[$ defined by $\tau(t)=u-t$ is a definable homeomorphism.

We next reduce to the case in which $J=]0,\infty[$.
We have only three possibilities; that is $J=]v,+\infty[$, $J=]-\infty,v[$ and $J=M$ for some $v \in M$.
In the first and second cases, we may assume that $J=]0,\infty[$ because $J=]v,+\infty[$ and  $J=]-\infty,v[$ are obviously definable homeomorphic to $]0,\infty[$.
In the last case, set $u'=\varphi^{-1}(0)$.
Then the restriction of $\varphi$ to the open interval $]0,u'[$ is a definable homeomorphism between $]0,u'[$ and $]-\infty,0[$.
Hence, we can reduce to the second case.
We have constructed a strictly increasing definable homeomorphism $\varphi:]0,u[ \rightarrow ]0,\infty[$.
We fix such a homeomorphism.

We next construct a definable strictly increasing homeomorphism between an arbitrary  bounded open interval and $]0,\infty[$.
We may assume that the bounded interval is of the form $]0,v[$.
We have nothing to do when $v=u$.
When $v<u$, the map defined by $\varphi(t+u-v)-\varphi(u-v)$ for all $t \in ]0,v[$ is a definable homeomorphism between $]0,v[$ and $]0,\infty[$.
When $v>u$, consider the map $\psi:]0,v[ \rightarrow ]0,\infty[$ given by $\psi(t)=t$ for all $t \leq v-u$ and $\psi(t)=\varphi(t+u-v)+v-u$ for the other case.
It is a desired definable homeomorphism.
We have constructed a definable homeomorphism between $]0,u[$ and all open intervals other than $M$.

The remaining task is to construct a definable homeomorphism between $]0,u[$ and $M$.
There exists a strictly increasing definable homeommorphisms $\psi_1:]0,u/2[ \rightarrow ]-\infty,0[$ and $\psi_2:]u/2,u[ \rightarrow ]0,\infty[$.
The definable map $\psi:]0,u[ \rightarrow M$ given by $\psi(t)=\psi_1(t)$ for $t<u/2$, $\psi(t)=0$ for $t=u/2$ and $\psi(t)=\psi_2(t)$ for $t>u/2$ is a definable homeomorphism.
They are well-defined because $(M,+)$ is a divisible group by \cite[Proposition 2.2]{M}.
\end{proof}

\begin{lemma}\label{lem:unbound}
Consider a uniformly locally o-minimal expansion of the second kind of a densely linearly ordered abelian group  $\mathcal M=(M,<,+,0,\ldots)$.
If the structure $\mathcal M$ has a strictly monotone definable homeomorphism between a bounded open interval and an unbounded open interval, it is o-minimal.
\end{lemma}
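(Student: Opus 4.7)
The plan is as follows: apply Lemma \ref{lem:unbound0} to obtain a strictly increasing definable homeomorphism $\psi:(0,u)\to M$ for some $u>0$. The key structural use of $\psi$ is to transfer the frontier of a definable subset of $M$ (which a priori can be unbounded) into the bounded interval $(0,u)$, where Lemma \ref{lem:main} and a definable compactness argument become applicable. Using definable connectedness of open intervals (\cite[Proposition 1.4]{M}), a definable set $Y\subseteq M$ is a finite union of points and open intervals if and only if its frontier $B=\overline{Y}\cap\overline{M\setminus Y}$ in $M$ is finite, so the goal is to prove the finiteness of $B$ for an arbitrary definable $Y$.

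Fix $Y$ and the associated $B$. By local o-minimality at each point of $M$, $B$ is locally finite, hence discrete and closed. Put $D=\psi^{-1}(B)\subseteq(0,u)$, which is discrete in $(0,u)$ since $\psi$ is a homeomorphism. Apply local o-minimality at the points $0,u\in M$ to the definable set $D\subseteq M$ and use discreteness to rule out interval pieces; this produces $\delta,\delta'>0$ such that $D\cap(0,\delta)$ and $D\cap(u-\delta',u)$ are finite, so $D$ is closed in $M$. It then suffices to show that $D\cap C$ is finite, where $C=[\delta,u-\delta']$ is a closed bounded subinterval of $(0,u)$.

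Define the definable positive function $\sigma:D\cap C\to M_{>0}$ by $\sigma(d)=\operatorname{dist}(d,D\setminus\{d\})$; positivity uses that $D$ is discrete and closed in $M$. For distinct $d,d'\in D\cap C$ one has $|d-d'|\ge \sigma(d)$, so $|d-d'|<\sigma(d)/2$ forces $d'=d$, which in turn gives $\sigma(d')=\sigma(d)\ge \sigma(d)/2$. The hypothesis of Lemma \ref{lem:main} is thereby satisfied on the closed bounded set $D\cap C$ with $\varphi=\sigma$ and $\psi=\sigma/2$, yielding $\rho:=\inf_{d\in D\cap C}\sigma(d)>0$; in other words, $D\cap C$ is $\rho$-separated. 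To finish, cover $C$ by the definable family $\{(c-\rho/2,c+\rho/2)\}_{c\in C}$: each interval has length $\rho$ and so contains at most one point of $D\cap C$ by $\rho$-separation, and extracting a finite subcover using the definable Heine-Borel property for closed bounded sets in a definably complete structure yields the finiteness of $D\cap C$. Combining this with the finite tails near $0$ and $u$, $B=\psi(D)$ is finite, $Y$ is a finite union of points and open intervals, and $\mathcal M$ is o-minimal. The main obstacle is precisely this final Heine-Borel step, whose rigorous execution proceeds via arguments in the spirit of Lemma \ref{lem:main}.
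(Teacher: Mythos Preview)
Your argument has a genuine gap at the final Heine--Borel step, and this is not a matter of missing routine details.  You claim that a $\rho$-separated closed bounded definable subset of $M$ is finite by extracting a finite subcover from the family $\{(c-\rho/2,c+\rho/2)\}_{c\in C}$.  But in a definably complete expansion of an ordered group that is \emph{not archimedean}, closed bounded intervals are not definably compact in the finite-subcover sense: if $\rho$ is infinitesimal relative to the length of $C$, no finite collection of intervals of length $\rho$ can cover $C$.  Concretely, in an elementary extension of $(\mathbb R,<,+,0,\mathbb Z)$ one finds closed bounded discrete definable sets that are infinite (nonstandard integers in $[0,H]$ for $H$ infinite), and these are even $1$-separated.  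So the implication ``$\rho$-separated $+$ bounded $\Rightarrow$ finite'' fails in general DCULOAS structures, and your appeal to ``arguments in the spirit of Lemma~\ref{lem:main}'' does not repair this: Lemma~\ref{lem:main} can give you $\rho>0$, but it cannot give you finiteness.  Under the hypothesis of the lemma the desired finiteness is in fact \emph{true}, but only because the conclusion (o-minimality) forces it; invoking it here is circular.

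The deeper issue is that your route never uses the ``second kind'' uniformity, which is exactly the leverage the paper exploits.  The paper's proof does something quite different: it uses the homeomorphism $\varphi:\,]0,u[\to\,]0,\infty[$ to build a parametrised compression $\psi(x,y)=\varphi^{-1}(x+\varphi(y))$ that squeezes an arbitrary bounded definable set $X$ into an arbitrarily small interval near~$0$ as $y\to 0^+$.  The uniform local o-minimality \emph{of the second kind} then furnishes a single $c>0$ and $d>0$ such that for every $0<y<d$ the fiber $\psi(X,y)=Z_y\cap\,]{-c},c[$ is a finite union of points and open intervals; choosing $y$ small enough so that $\psi(X,y)\subseteq\,]0,c[$ pulls this finite decomposition back to $X$.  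This is where the ``second kind'' hypothesis does real work, and your approach bypasses it entirely.  (A minor additional point: you invoke Lemma~\ref{lem:main}, which is stated for DCULOAS structures, while the present lemma does not assume definable completeness; the paper's citation of Lemma~\ref{lem:unbound0} has the same wrinkle, but the reduction it actually needs can be done by hand.)
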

\begin{proof}
Since the map $\tau:]0,u[ \rightarrow ]0,u[$ defined by $\tau(t)=u-t$ is a definable homeomorphism, we may assume that there exists a strictly decreasing definable homeomorphism $\varphi:]0,u[ \rightarrow ]0,+\infty[$ for some $u>0$ by Lemma \ref{lem:unbound0}.

Let $X$ be an arbitrary definable subset of $M$.
We show that it is a finite union of points and open intervals.

We first consider the case in which $X$ is bounded.
We may assume that $x>0$ for all $x \in X$ by shifting the definable set $X$ if necessary.
Take $N>0$ with $x<N$ for all $x \in X$.
Consider the map $\psi:]0, \infty[ \times ]0,u[ \rightarrow ]0,u[$ defined by $$\psi(x,y)=\varphi^{-1}(x+\varphi(y))\text{.}$$
Set $Z=\{(z,y) \in ]0, u[ \times ]0,u[\;|\;z=\psi(x,y) \text{ for some }x  \in X\}$.
The notation $Z_y$ denotes the set $\{z \in M\;|\; (z,y) \in Z\}$ for all $y \in M$.
Since $\mathcal M$ is a uniformly locally o-minimal structure of the second kind, there exists $c>0$ and $d>0$ such that, for any $0<y<d$, the intersection $Z_y \cap ]-c,c[$ is a finite union of points and open intervals.
We may assume that $c<u$ taking a smaller $c$ if necessary.
Take $0<y<c$.
For all $x \in X$, we have 
\begin{align*}
\psi(x,y) &=\varphi^{-1}(x+\varphi(y))<\varphi^{-1}(x+\varphi(c))<\varphi^{-1}(\varphi(c))=c
\end{align*}
because $x>0$ when $x \in X$.
It means that $\psi(X,y)$ is contained in the open interval $]0,c[$.
Fix a sufficiently small $y>0$ with $y< \min\{c,d\}$.
We have $\psi(X,y)=Z_y \cap ]-c,c[$, which is a finite union of points and open intervals.
Since the map $\psi(\cdot, y)$ is a definable homeomorphism for the fixed $y$, the set $X$ itself is a finite union of points and open intervals.

We next consider the case in which $X$ is unbounded.
Set $X_+=\{x \in X\;|\; x>0\}$ and $X_-=\{x \in X\;|\; x<0\}$.
Consider the sets $\varphi^{-1}(X_+)$ and $\varphi^{-1}(-X_-)$.
They are bounded definable subsets of $M$, and they are finite unions of points and open intervals.
Therefore, $X$ itself is a a finite union of points and open intervals because $\varphi$ is definable strictly decreasing homeomorphism.
We have demonstrated that $\mathcal M$ is o-minimal.
\end{proof}

\begin{definition}
Consider an expansion of a densely linearly ordered abelian group  $\mathcal M=(M,<,+,0,\ldots)$.
It is called \textit{archimedean} if, for any positive $a,b \in M$, there exists a positive integer $n$ with $na>b$.
Here, $na$ denotes the sum of $n$ copies of $a$.
\end{definition}

The following theorem is the last main theorem of this paper. 
Its proof is inspired by \cite[Example 3.4]{AT}.
\begin{theorem}\label{thm:tietze}
Consider an archimedean DCULOAS structure $\mathcal M =(M,<,+,0,\ldots)$.
If the structure $\mathcal M$ enjoys definable Tietze extension property, the structure $\mathcal M$ is o-minimal and it has a strictly monotone definable homeomorphism between a bounded open interval and an unbounded open interval.
\end{theorem}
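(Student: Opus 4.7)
The plan is to use the definable Tietze extension property to construct a continuous definable function $F\colon M^{2}\to M$ interpolating between an unbounded linear boundary value and a constant zero boundary value, and then extract from a level set of $F$ a definable strictly monotone homeomorphism between a bounded and an unbounded open interval. Once this homeomorphism is in hand, Lemma~\ref{lem:unbound} immediately delivers o-minimality, proving the theorem.

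Concretely, fix positive elements $a,b\in M$ with $b<a$ (possible by divisibility of $(M,+)$, see \cite[Proposition~2.2]{M}) and let $A=(M\times\{0\})\cup(M\times\{a\})\subset M^{2}$. Define the continuous definable function $f\colon A\to M$ by $f(x,0)=x$ and $f(x,a)=0$. Since $A$ is closed, the hypothesized Tietze property yields a continuous definable extension $F\colon M^{2}\to M$. For each $N$ with $N\geq b$, the one-variable definable continuous function $y\mapsto F(N,y)$ on $[0,a]$ runs from the value $N$ at $y=0$ to $0$ at $y=a$, so by the intermediate value property on the definably connected interval $[0,a]$ (see \cite[Proposition~1.4]{M}) there exists $y\in \,]0,a[$ with $F(N,y)=b$. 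Applying Lemma~\ref{lem:definable_choice} to the level set $L=F^{-1}(b)$ then furnishes a definable section $\tau\colon\pi_{1}(L)\to \,]0,a[$ with $F(x,\tau(x))=b$, and the argument above shows that $\pi_{1}(L)$ contains every $N\geq b$ and is therefore unbounded in $M$.

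By Proposition~\ref{prop:pre}(7) and Lemma~\ref{lem:discrete}, the set of discontinuities of $\tau$ is discrete and closed, so by shrinking we may assume $\tau$ is continuous. Theorem~\ref{thm:mono} together with Lemma~\ref{lem:dc_mono} then reduces to an open subinterval of $\pi_{1}(L)$ on which $\tau$ is strictly monotone. Its image is a bounded subinterval of $]0,a[$, while the uniform continuity of $F$ on each closed bounded box $[-N,N]\times[0,a]$ (Corollary~\ref{cor:uniform}), combined with the archimedean hypothesis, ensures that the domain of this restricted $\tau$ remains unbounded in $M$. Inverting yields a strictly monotone definable continuous homeomorphism from a bounded open interval onto an unbounded open interval, and Lemma~\ref{lem:unbound} now gives o-minimality.

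The principal obstacle is the final domain-unboundedness step: one must rule out that the monotone piece of $\tau$ extracted by Theorem~\ref{thm:mono} has bounded domain, or equivalently that $\tau$ stabilizes on some cofinal ray. The archimedean hypothesis is essential here because it forces the uniform continuity moduli of $F$ on $[-N,N]\times[0,a]$ to vary non-trivially as $N$ grows, precluding the pathological stabilization of $\tau$ that could occur in a non-archimedean setting and thereby guaranteeing that the inverse of $\tau$ genuinely realizes the required bounded-to-unbounded homeomorphism.
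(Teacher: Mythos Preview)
Your overall strategy---apply the Tietze property to manufacture a continuous definable $F$ with prescribed boundary data, extract from $F$ a definable function that should witness a bounded/unbounded homeomorphism, then invoke Lemma~\ref{lem:unbound}---is the same as the paper's. The gap is in the extraction step.

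Your selector $\tau$ has \emph{unbounded} domain (a cofinal ray in $M$) and bounded range $]0,a[$. When you feed $\tau$ to the monotonicity theorem (Theorem~\ref{thm:mono}), the discrete closed exceptional set $X_d$ need not be finite in a DCULOAS structure that is not yet known to be o-minimal; it could be an unbounded discrete set such as a copy of $\mathbb Z$. Consequently every connected component of $X_+\cup X_-$ may be bounded, and you have no guarantee of an unbounded interval on which $\tau$ is strictly monotone. Your final paragraph misidentifies the obstruction: the danger is not that $\tau$ ``stabilizes on a cofinal ray'' (constancy) but that $\tau$ oscillates, flipping monotonicity direction across infinitely many points of $X_d$. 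The gesture toward ``uniform continuity moduli of $F$ on $[-N,N]\times[0,a]$ varying with $N$'' does not touch this; knowing that the modulus shrinks as $N$ grows says nothing about the pattern of monotone pieces of a particular level-set selector.

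The paper avoids this by reversing the roles of domain and range. It defines the modulus-of-continuity function $\varphi(y)$ of $F(\cdot,y)$ on a fixed bounded interval $[0,c]$, uses the archimedean hypothesis in a telescoping estimate to show $\inf_{y>d}\varphi(y)=0$ for every $d>0$, and then builds an inverse-type function $\psi$ on a \emph{bounded} interval $]0,u[$. Crucially, the paper proves $\psi$ is unbounded on \emph{every} initial segment $]0,u'[$, using Corollary~\ref{cor:uniform} on the closed bounded box $[0,c]\times[d,v]$. Because the domain is bounded, local o-minimality at the endpoint $0$ suffices: Theorem~\ref{thm:mono} and Lemma~\ref{lem:dc_mono} give a single small interval $]0,u''[$ on which $\psi$ is strictly monotone, and the ``unbounded on every initial segment'' property forces this piece to be the desired homeomorphism. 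Working near a fixed endpoint of a bounded domain is exactly what lets the local monotonicity theorem do the job; your $\tau$ lives at the wrong end of the bounded/unbounded dichotomy for that to go through.
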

\begin{proof}
We have only to construct a strictly monotone definable homeomorphism between a bounded open interval and an unbounded open interval by Lemma \ref{lem:unbound}.
Fix $c>0$.
Set $X=\{(x,y) \in M^2\;|\; x \leq 0 \text{ or } x \geq c\}$.
Consider the definable continuous map $f:X \rightarrow M$ given by $f(x)=-y$ if $x \leq 0$ and $f(x,y)=y$ otherwise.
Since the structure $\mathcal M$ enjoys definable Tietze extension property by the assumption, the function $f$ has a definable continuous extension $F:M^2 \rightarrow M$.

Fix $\varepsilon >0$.
The map $g_y:[0,c] \rightarrow M$ given by $g_y(x)=F(x,y)$ are uniformly continuous for all $y \in M$ by Corollary \ref{cor:uniform}.
Therefore there exists $\delta_y>0$ such that the condition $|x-x'|<\delta_y$ implies that $|F(x,y)-F(x',y)|<\varepsilon$ for all $x,x' \in [0,c]$.
It means that the definable function $\varphi:M_{>0} \rightarrow M_{>0}$ defined by 
$$
\varphi(y)=\sup\{0<\delta \leq c\;|\; \forall x,x' \in [0,c],\ |x-x'|<\delta \Rightarrow |F(x,y)-F(x',y)|<\varepsilon\}
$$
is well-defined.

The infimum $\inf \varphi(M_{>d})$ always exists for any $d>0$ because $\mathcal M$ is definably complete.
We prove that $$\inf \varphi(M_{>d})=0\text{.}$$
We lead to a contradiction assuming that $\inf \varphi(M_{>d})>0$ for some $d>0$.
Take a positive $\mu>0$ with $\mu<\inf \varphi(M_{>d})$.
We have $\varphi(y)>\mu$ for all $y>d$.
There exists a positive integer $n$ with $n\mu>c$ because $\mathcal M$ is archimedean.
Set $x_i=\frac{i}{n}c$ for all $0 \leq i \leq n$.
They are well-defined because $(M,+)$ is a divisible group by \cite[Proposition 2.2]{M}.
We have $|x_i-x_{i-1}|=\frac{c}{n} < \mu <\varphi(y)$ for all $y>d$ and $1 \leq i \leq n$.
For any $y>d$, we get 
$$
2y =|F(c,y)-F(0,y)| \leq \sum_{i=1}^n |F(x_{i},y)-F(x_{i-1},y)|< n\varepsilon
$$
by the definition of $\varphi(y)$.
It is a contradiction because $y$ is an arbitrary element with $y>d$.
We have demonstrated that $\inf \varphi(M_{>d})=0$.

Fix $d>0$.
Since $\varphi(M_{>d})$ is a set definable in a locally o-minimal structure and $\inf \varphi(M_{>d})=0$, there exists $u>0$ such that the open interval $]0,u[$ is contained in $\varphi(M_{>d})$.
Consider the definable function $\iota:]0,u[ \rightarrow M_{>0}$ given by 
$$
\iota(t)=\inf\{y \in M_{>d}\;|\;\varphi(y)=t\}\text{.}
$$
We define $\psi:]0,u[ \rightarrow M_{>0}$ as follows:
Set $\psi(t)=\iota(t)$ when $t=\varphi(\iota(t))$.
Otherwise, the set $$T_t=\{y \in M_{>d}\;|\; y >\iota(t),\  \forall y', \ \iota(t)<y'<y \Rightarrow \varphi(y')=t\}$$ is not empty because of local o-minimality.
The supremum $e(t) = \sup T_t \in M \cup \{+\infty\}$ exists by definable completeness.
Set $\psi(t)=\dfrac{\iota(t)+e(t)}{2}$ when $e(t)<\infty$, and set $\psi(t)=\iota(t)+c$ otherwise.
We have $\varphi(\psi(t))=t$ by the definition.

For any $0<u'<u$, the restriction of $\psi$ to the open interval $]0,u'[$ is unbounded.
Assume the contrary.
There exists $0<u'<u$ and $v>0$ such that $\psi(]0,u'[)$ is contained in $[d,v]$.
Since the closed box $[0,c] \times [d,v]$ is bounded, there exists $\widetilde{\delta}>0$ such that the following condition holds true by Corollary \ref{cor:uniform}:
$$
\forall (x,y),(x',y') \in [0,c] \times [d,v],\ |(x,y)-(x',y')|<\widetilde{\delta} \Rightarrow |F(x,y)-F(x',y')|<\varepsilon\text{.}
$$
It implies that $\varphi(y) \geq \widetilde{\delta}$ for all $d \leq y \leq v$.
We may assume that $\widetilde{\delta}<u'$ taking a smaller $\widetilde{\delta}$ if necessary.
Take $t>0$ smaller than $\widetilde{\delta}$.
We have $d \leq \psi(t) \leq v$ and $\varphi(\psi(t)) = t < \widetilde{\delta}$.
Contradiction.
We have proven that the restriction of $\psi$ to the open interval $]0,u'[$ is unbounded for any $0<u'<u$.

Taking a smaller $u>0$ if necessary, we may assume that the function $\psi$ is continuous and monotone by Theorem \ref{thm:mono} and Lemma \ref{lem:dc_mono}.
Since the restriction of $\psi$ to the open interval $]0,u'[$ is unbounded for any $0<u'<u$, it is strictly decreasing.
The restriction $\psi$ to the open interval $]0,u/2[$ is a strictly monotone definable homeomorphism between the bounded open interval $]0,u/2[$ and the unbounded open interval $]\psi(u/2),\infty[$.
\end{proof}

We have only proved that an archimedean DCULOAS structure which enjoys definable Tietze extension property is o-minimal in Theorem \ref{thm:tietze}.
The following conjecture is still open.
\begin{conjecture}
A DCULOAS structure is o-minimal when it enjoys definable Tietze extension property.
\end{conjecture}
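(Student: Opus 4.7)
The archimedean hypothesis enters the proof of Theorem~\ref{thm:tietze} in exactly one place: when subdividing $[0,c]$ into $n$ equal pieces $x_i = (i/n)c$ with $n \in \mathbb{N}$ chosen so that $n\mu > c$, producing the bound $2y < n\varepsilon$ which contradicts the free choice of $y > d$. Every other ingredient---the construction of $F$ and $\varphi$, the auxiliary function $\psi$, and the reduction through Lemmas~\ref{lem:unbound0}--\ref{lem:unbound}---already works in a general DCULOAS structure. Hence proving the conjecture reduces to bypassing this one finite-subdivision step.

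My preferred plan is the contrapositive: try to show that a non-archimedean DCULOAS structure $\mathcal{M}$ cannot enjoy the definable Tietze extension property; archimedeanness is then forced and Theorem~\ref{thm:tietze} applies. Fix $a, c > 0$ with $na < c$ for every $n \in \mathbb{N}$, and rerun the construction of Theorem~\ref{thm:tietze} at width $a$ rather than $c$: put $X_a = \{(x,y) \in M^2 : x \leq 0 \text{ or } x \geq a\}$, define $f_a(x,y) = -y$ on the left component and $y$ on the right, and, using definable Tietze, extend to a continuous $F_a \colon M^2 \to M$. Within the archimedean class $\Lambda$ of $a$, multiplication by standard integers is cofinal, so the partition step $n\mu_a > a$ \emph{would} succeed with standard $n$ provided one can restrict the oscillation function $\varphi_a$ to $\Lambda$. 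The hope is that the restricted $\psi_a$ becomes a strictly monotone definable homeomorphism from a bounded interval to an unbounded interval contained in $\Lambda$, forcing o-minimality by Lemma~\ref{lem:unbound}, after which the coexistence of the two distinct archimedean classes of $a$ and $c$ must be ruled out by a separate local-o-minimality argument applied to a suitable definable set built from $F_a$.

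The central obstacle is that the archimedean class $\Lambda$ is \emph{not} definable in $\mathcal{M}$: its definition involves $\mathbb{N}$, which is not part of the first-order language. Every tool at our disposal---cells, dimension, definable choice, definable completeness, local cell decomposition---is first-order, so any genuine proof must manufacture, from the continuous extension $F_a$ alone, a first-order definable substitute for $\Lambda$ (or for some other object whose properties capture the failure of archimedeanness). The doubling and self-similarity behaviour of $\varphi$ across scales is the natural candidate: one would try to extract from $\varphi$ either an internal recursion scheme or a definable discrete subset of a bounded interval whose cardinality is inconsistent with local o-minimality of the second kind. Producing such a witness without appeal to $\mathbb{N}$ is the step on which the conjecture stands or falls, and is precisely what the author leaves open.
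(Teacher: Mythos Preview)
Your analysis is accurate, but note that the paper itself contains \emph{no proof} of this statement: it is explicitly labelled a Conjecture and the author writes ``The following conjecture is still open'' immediately before stating it. There is therefore no paper proof to compare against.

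What you have written is not a proof but a diagnosis, and as such it is correct. You rightly isolate the single use of archimedeanness in the proof of Theorem~\ref{thm:tietze} (the finite subdivision $x_i=(i/n)c$ with $n\mu>c$), and you correctly identify the fundamental obstruction to removing it: the archimedean class of an element is not a first-order definable set, so one cannot simply restrict the argument to such a class. Your closing sentence---that producing a definable substitute for this non-definable object ``is precisely what the author leaves open''---matches the paper's own position exactly.

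If your intent was to \emph{prove} the conjecture, then there is a genuine gap: the contrapositive plan you sketch never gets off the ground, because (as you yourself note) every step that would distinguish the non-archimedean case requires quantifying over $\mathbb{N}$, and no first-order replacement is supplied. The suggestion to look for ``an internal recursion scheme or a definable discrete subset of a bounded interval'' is a reasonable heuristic, but no construction is given. In short, you have correctly explained why the conjecture is open rather than closed it.
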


\begin{remark}
Consider a definably complete locally o-minimal structure $\mathcal M=(M,<,\ldots)$ such that the projection image of any definable discrete set under a coordinate projection.
We can define the dimension of set definable in this structure without using cells.
The assertions in Section \ref{sec:review} hold true for this structure except Lemma \ref{lem:ato}.
Lemma \ref{lem:ato} is only used for the proof of Theorem \ref{thm:discont}.
The assertions in Section \ref{sec:property} and Section \ref{sec:tietze} except it hold true if the structure also satisfies the definable choice lemma given in Lemma \ref{lem:definable_choice}.
\end{remark}

\section{Appendix: Tietze extension property in o-minimal structures}\label{sec:appendix}

We study o-minimal structures in this section.
Readers who are not familiar with o-minimal structures should consult \cite{vdD, KPS, PS}.
We give an equivalent condition for an o-minimal expansion of an ordered group enjoying definable Tietze extension property.
We first introduce two lemmas.

\begin{lemma}\label{lem:unbound1}
Consider an o-minimal structure.
The structure has a definable beijection between a bounded interval and an unbounded interval if and only if it has a definable homeomorphism between a bounded interval and an unbounded interval.
\end{lemma}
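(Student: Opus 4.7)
The statement is a biconditional, and one direction is trivial: any homeomorphism is in particular a bijection, so the existence of a definable homeomorphism between a bounded and an unbounded interval gives at once a definable bijection between such intervals.

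For the substantive direction, suppose we have a definable bijection $f:I\to J$ with $I$ bounded and $J$ unbounded. The plan is to apply the o-minimal monotonicity theorem to $f$: there is a finite partition of $I$ into points $a_1,\dots,a_m$ and open subintervals $I_1,\dots,I_k$ such that on each $I_j$ the function $f$ is continuous and either constant or strictly monotone. Injectivity of $f$ rules out the constant case, so on each $I_j$ the restriction $f|_{I_j}$ is continuous and strictly monotone, hence a homeomorphism onto its image $f(I_j)$. By the intermediate value property available in o-minimal structures, $f(I_j)$ is itself an interval.

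Now the partition yields $J=\{f(a_1),\dots,f(a_m)\}\cup f(I_1)\cup\cdots\cup f(I_k)$. Since $J$ is unbounded and the finite set of points $f(a_i)$ is certainly bounded, at least one of the intervals $f(I_{j_0})$ must be unbounded. Fix such $j_0$. Then $I_{j_0}$ is a subinterval of $I$, hence bounded, while $f(I_{j_0})$ is unbounded, and $f|_{I_{j_0}}:I_{j_0}\to f(I_{j_0})$ is the required definable homeomorphism.

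There is essentially no obstacle here: the whole argument is a direct invocation of the monotonicity theorem together with the fact that unboundedness cannot survive a finite decomposition in which only finitely many of the pieces are subintervals. The only point requiring any care is to notice that the endpoints $a_i$ contribute only finitely many isolated values to $J$ and can therefore be discarded, forcing some $f(I_j)$ itself to be unbounded.
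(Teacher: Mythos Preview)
Your argument is correct and is exactly what the paper intends: its proof consists of the single sentence ``It is immediate from the monotonicity theorem \cite[Chapter 3, Theorem 1.2]{vdD},'' and you have simply written out the details of that immediate consequence.
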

\begin{proof}
It is immediate from the monotonicity theorem \cite[Chapter 3, Theorem 1.2]{vdD}.
\end{proof}

\begin{lemma}\label{lem:partition}
Consider an o-minimal structure $\mathcal M=(M,<,\ldots)$ and a definable function $f:[a,b] \times M \rightarrow M$, where $a$ and $b$ are constants.
The definable function $f_x:M \rightarrow M$ is given by $f_x(y)=f(x,y)$ for any $a \leq x \leq b$.
Let $\pi:M^2 \rightarrow M$ denote the projection onto the first coordinate.

Then there exists a partition of $[a,b] \times M$ into cells $\{C_1, \ldots, C_n\}$ such that, for any cell $C_i$, the cell $C_i$ does not contain the set of the form $\{c\} \times M$ with $c \in M$ and one of the following three conditions is satisfied:
\begin{itemize}
\item The restriction $f_x|_{(C_i)_x}$ of $f_x$ to $(C_i)_x$ is strictly increasing for any $x \in \pi(C_i)$;
\item The restriction $f_x|_{(C_i)_x}$ is strictly decreasing for any $x \in \pi(C_i)$;
\item The restriction $f_x|_{(C_i)_x}$ is constant for any $x \in \pi(C_i)$.
\end{itemize}
Here, the notation $(C_i)_x$ denotes the fiber $\{y \in M\;|\;(x,y) \in C_i\}$.
\end{lemma}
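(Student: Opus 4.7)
The plan is to capture the pointwise monotonicity data of the o-minimal monotonicity theorem, applied fiberwise to $f_x$, as a single cell decomposition of $[a,b] \times M$. First I would introduce the definable subsets of $[a,b] \times M$
\[ A_+ = \{(x,y) \;|\; f_x \text{ is strictly increasing on some neighborhood of } y \text{ in } M\}, \]
and analogously $A_-$ (with ``strictly decreasing'' in place of ``strictly increasing'') and $A_0$ (with ``constant''). Applying the o-minimal monotonicity theorem to each $f_x$ shows that the complement of $A_+ \cup A_- \cup A_0$ meets each fiber $\{x\} \times M$ in a finite set.

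Next I would apply o-minimal cell decomposition to $[a,b] \times M$ compatible with $A_+$, $A_-$, $A_0$, and with the auxiliary \emph{zero section} $[a,b] \times \{0\}$; let $C_1, \ldots, C_n$ be the resulting cells. If $(C_i)_x$ is a single point (or empty), the trichotomy in the conclusion holds vacuously. Otherwise $(C_i)_x$ is an open interval and, by compatibility, $C_i$ lies in exactly one of $A_+$, $A_-$, $A_0$. On such a fiber $f_x$ is respectively locally strictly increasing, locally strictly decreasing, or locally constant; since an open interval is definably connected, Lemma \ref{lem:dc_mono} (together with the elementary ``locally constant implies constant on a connected set'' argument) upgrades each of these local assertions to its global counterpart.

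The only clause demanding genuine care is that no $C_i$ contain a set $\{c\} \times M$, and this is the point of including the zero section in the refining family. Since $[a,b] \times \{0\}$ is then a union of cells of the decomposition and cells are pairwise disjoint, any cell $C_i$ with positive-dimensional fiber $(C_i)_c = (g_1(c), g_2(c))$ must satisfy $0 \notin (g_1(c), g_2(c))$; otherwise $(c,0)$ would lie in both $C_i$ and in one of the cells making up $[a,b] \times \{0\}$. Hence $(C_i)_c \subset M_{<0}$ or $(C_i)_c \subset M_{>0}$, which forces $(C_i)_c \neq M$ and so $\{c\} \times M \not\subset C_i$ for every $c$. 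I expect this last verification, rather than the monotonicity step, to be the subtle part of the argument; the zero-section trick resolves it cleanly.
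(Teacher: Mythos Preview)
Your proposal is correct and follows essentially the same route as the paper: define the three local-monotonicity loci, take a cell decomposition of $[a,b]\times M$ partitioning them together with a fixed horizontal slice, and read off the conclusion. The paper uses the slice $[a,b]\times\{a\}$ rather than $[a,b]\times\{0\}$; since the lemma is stated for a bare o-minimal structure $(M,<,\ldots)$ with no distinguished $0$, you should likewise use $a$ (or any named constant) in place of $0$, but this is purely cosmetic.
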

\begin{proof}
Consider the following definable sets:
\begin{align*}
X_{\text{inc}} &= \{(x,y) \in [a,b] \times M\;|\; \exists y_1, \exists y_2,\ (y_1<y<y_2)\\
&\qquad  \wedge (f_x \text{ is strictly increasing on }]y_1,y_2[)\}\\
X_{\text{dec}} &= \{(x,y) \in [a,b] \times M\;|\; \exists y_1, \exists y_2,\ (y_1<y<y_2)\\
&\qquad  \wedge (f_x \text{ is strictly decreasing on }]y_1,y_2[)\}\\
X_{\text{con}} &= \{(x,y) \in [a,b] \times M\;|\; \exists y_1, \exists y_2,\ (y_1<y<y_2)\\&\qquad\wedge (f_x \text{ is constant on }]y_1,y_2[)\}
\end{align*}
The fiber $(X_{\text{inc}} \cup X_{\text{dec}} \cup X_{\text{con}})_x$ at $x$ is dense in $M$ for any $a \leq x \leq b$ by the monotonicity theorem.
Apply the definable cell decomposition theorem \cite[Chapter 3, Theorem 2.12]{vdD}. We can get a cell decomposition of $M^2$ partitioning $X_{\text{inc}}$, $X_{\text{dec}}$,  $X_{\text{con}}$, $[a,b] \times M$ and $[a,b] \times \{a\}$.
The cells contained in $[a,b] \times M$ satisfy the requirements of the lemma.
\end{proof}

The following proposition is a part of \cite[Fact 1.6]{E}.
\begin{proposition}\label{prop:edmundo}
Consider an o-minimal expansion of an ordered group $\mathcal M=(M,<,+,0,\ldots)$.
The followings are equivalent:
\begin{enumerate}
\item[(1)] There exists a definable bijection between a bounded interval and an unbounded interval.
\item[(2)] In $\mathcal M$, we can define a real closed field whose universe is an unbounded subinterval of $M$ and whose ordering agrees with $<$.
\end{enumerate}
\end{proposition}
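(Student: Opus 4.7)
The direction $(2) \Rightarrow (1)$ admits a direct proof using only the field operations. Suppose $R$ is a real closed field definable on an unbounded subinterval $J$ of $M$ whose order agrees with $<$. Since $J$ is unbounded in $M$, the set $J_1 = \{x \in J : x >_R 1_R\}$ is itself an interval unbounded in $M$. Multiplicative inversion $x \mapsto x^{-1}$, which is definable in $R$, maps $J_1$ bijectively onto $\{y \in J : 0_R <_R y <_R 1_R\}$, a bounded subinterval of $M$. Restricting to open subintervals if necessary supplies a definable bijection between an unbounded interval and a bounded one, giving (1).

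For the harder direction $(1) \Rightarrow (2)$, the plan is to invoke the Peterzil--Starchenko trichotomy theorem for o-minimal expansions of ordered groups. First I would apply Lemma \ref{lem:unbound1} together with the o-minimal monotonicity theorem to replace the given bijection by a strictly monotone definable homeomorphism $\phi \colon I \to J$ between a bounded open interval $I$ and an unbounded open interval $J$; by Lemma \ref{lem:unbound0} we can normalize to $I = \,]0,u[$ and $J = \,]0,+\infty[$ with $\phi$ increasing. Such a $\phi$ is a definable continuous function whose germ grows strictly faster than any function definable from the ordered group structure alone, so $\mathcal{M}$ cannot be linear (in the Peterzil--Starchenko sense) at every point of $I$. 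At any non-linear point, the trichotomy theorem produces a definable real closed field on some open neighborhood. The ordered group translations then allow one to push this field structure along, via a construction analogous to the one in the proof of Lemma \ref{lem:unbound0}, until its universe becomes an unbounded subinterval of $M$ whose ordering coincides with $<$.

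The principal obstacle lies entirely in the direction $(1) \Rightarrow (2)$: distilling an honest multiplication satisfying the real-closed-field axioms from only a definable unbounded function on a bounded interval and an ordered group operation is the deep content of the statement, and cannot be done by a short direct manipulation. This is precisely the work carried out by the Peterzil--Starchenko trichotomy theorem, and it is the reason the proposition is cited as part of \cite[Fact 1.6]{E} rather than reproved from scratch. My plan is to take the trichotomy as a black box and assemble the two implications using it together with the elementary Lemmas \ref{lem:unbound0} and \ref{lem:unbound1} already available.
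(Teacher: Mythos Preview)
The paper does not prove this proposition at all: it is stated with the prefatory sentence ``The following proposition is a part of \cite[Fact 1.6]{E}'' and is immediately followed by Theorem~\ref{thm:appendix} with no intervening proof environment. Your proposal already recognizes this, and your sketch goes well beyond what the paper does. Your argument for $(2)\Rightarrow(1)$ via field inversion is correct and self-contained; your plan for $(1)\Rightarrow(2)$ via the Peterzil--Starchenko trichotomy is a legitimate route, though the step of promoting the locally defined field to one on an \emph{unbounded} interval would need a bit more care (the trichotomy yields a field on a bounded neighbourhood, and one then composes with a definable homeomorphism onto an unbounded interval, which exists by Lemma~\ref{lem:unbound0}). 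In any case, since the paper treats the proposition as a black-box citation, there is nothing further to compare.
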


Finally, we prove the following theorem.
\begin{theorem}\label{thm:appendix}
Consider an o-minimal expansion $\mathcal M=(M,<,+,0,\ldots)$ of an ordered group.
The followings are equivalent:
\begin{enumerate}
\item[(1)] There exists a definable bijection between a bounded interval and an unbounded interval. 
\item[(2)] The structure $\mathcal M$ enjoys definable Tietze extension property.
\end{enumerate}
\end{theorem}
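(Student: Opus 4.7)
The plan is to prove the two directions of the equivalence separately. For $(1) \Rightarrow (2)$, I would use Proposition~\ref{prop:edmundo} to obtain a definable real closed field $\mathcal{R}$ in $\mathcal{M}$ whose universe is an unbounded subinterval $J \subseteq M$. I enrich $\mathcal{R}$ to a structure $\mathcal{R}^+$ whose definable sets are all $\mathcal{M}$-definable subsets of powers of $J$; since $\mathcal{R}^+$ is a definably complete expansion of the ordered field $\mathcal{R}$, it enjoys definable Tietze extension by \cite[Lemma 6.6]{AF}. Lemmas~\ref{lem:unbound0} and~\ref{lem:unbound1} produce a definable homeomorphism $h: M \to J$. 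Given a closed definable $A \subseteq M^n$ and a continuous definable $f: A \to M$, I transfer via $h^n$ to a closed $A' := h^n(A) \subseteq J^n$ together with the continuous $\mathcal{R}^+$-definable function $f' := h \circ f \circ (h^n)^{-1}: A' \to J$, extend $f'$ via Tietze in $\mathcal{R}^+$ to $F': J^n \to J$, and pull back to the required extension $F := h^{-1} \circ F' \circ h^n: M^n \to M$.

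For $(2) \Rightarrow (1)$, I adapt the construction of Theorem~\ref{thm:tietze}. Fix $c > 0$, let $X = \{(x, y) \in M^2 : x \leq 0 \text{ or } x \geq c\}$, and define $f: X \to M$ by $f(x, y) = -y$ on $x \leq 0$ and $f(x, y) = y$ on $x \geq c$. Using (2), extend $f$ to a continuous definable $F: M^2 \to M$, and define the modulus function
\[
\varphi(y) = \sup\{\, 0 < \delta \leq c \mid \forall x, x' \in [0, c],\ |x - x'| < \delta \Rightarrow |F(x, y) - F(x', y)| < 1 \,\},
\]
which is definable and takes values in the bounded interval $(0, c]$. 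By the o-minimal monotonicity theorem \cite[Chapter 3, Theorem 1.2]{vdD}, there is an unbounded interval $(d, \infty)$ on which $\varphi$ is continuous and either monotone or constant. If $\varphi$ is strictly monotone there, it is already a definable bijection from the unbounded $(d, \infty)$ onto a bounded subinterval of $(0, c]$, yielding condition (1). If instead $\varphi$ is eventually equal to a positive constant $\mu$, I apply Lemma~\ref{lem:partition} to $F|_{[0, c] \times M}$ to obtain a finite cell decomposition on which, within each cell, $F(x, \cdot)$ is monotone or constant in $y$ along every fiber; tracking the transition of the asymptotic behavior of $F(x, \cdot)$ from $x = 0$ (where $F(0, y) = -y$) to $x = c$ (where $F(c, y) = y$) across the finitely many cells yields the required definable bijection between a bounded and an unbounded interval.

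The main obstacle is the case of the reverse direction in which $\varphi$ is eventually constant: the archimedean counting argument from Theorem~\ref{thm:tietze} is no longer available, so the proof must instead exploit the finite cell decomposition provided by Lemma~\ref{lem:partition}. The delicate step is to extract a strictly monotone definable function between an unbounded and a bounded interval by analyzing how the asymptotic slopes of $F(x, \cdot)$ in $y$ transition from $-1$ at $x = 0$ to $+1$ at $x = c$ within the finite cell structure, which forces the existence of a bounded range of $x$ on which the corresponding asymptotic data realizes an unbounded family of values.
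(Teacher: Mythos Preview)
Your direction $(1) \Rightarrow (2)$ is essentially the paper's proof: obtain the definable real closed field on an unbounded interval $J$ via Proposition~\ref{prop:edmundo}, pass to the induced o-minimal expansion of an ordered field on $J$, apply definable Tietze extension there, and transfer via the definable homeomorphism $M \to J$ from Lemmas~\ref{lem:unbound1} and~\ref{lem:unbound0}. (The paper cites \cite[Chapter 8, Corollary 3.10]{vdD} rather than \cite{AF}, but either works.)

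For $(2) \Rightarrow (1)$, however, your choice of auxiliary function creates a real gap. You borrow the modulus-of-continuity function $\varphi(y)$ from Theorem~\ref{thm:tietze}, but that argument disposed of the eventually-constant case using the archimedean hypothesis, which you correctly note is unavailable here. The trouble is that when your $\varphi$ is eventually constant, you learn almost nothing structurally useful: it only says that the family $\{F(\cdot,y)\}_{y>d}$ has a common modulus of continuity on $[0,c]$, which does not by itself produce any definable map between a bounded and an unbounded interval. Your proposed rescue --- tracking ``asymptotic slopes'' of $F(x,\cdot)$ from $-1$ to $+1$ --- is not well-posed in an ordered group with no multiplication, and the phrase ``asymptotic data realizes an unbounded family of values'' is not attached to a specific definable function.

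The paper avoids this by choosing a different auxiliary function from the outset. With $f(x,y)=0$ for $x\le 0$ and $f(x,y)=y$ for $x\ge c$, it sets, for each $t>0$,
\[
\varphi_t(y)=\sup\{x\in[0,c]:F(x,y)=t\}\qquad(y>t),
\]
which is well-defined by the intermediate value theorem. If $\varphi_t$ is eventually strictly monotone for some $t$, one is done. The point is that in the remaining case --- $\varphi_t$ eventually constant for \emph{every} $t>0$ --- one obtains, for each $t$, a point $x_t\in[0,c]$ and $u_t$ with $F(x_t,y)=t$ for all $y>u_t$. This is precisely the data needed to run the roof argument with Lemma~\ref{lem:partition}: unboundedly many $t$ must land in a single open roof $C_m$, forcing $F$ to be constant in $y$ on $C_m$, and then boundedness of the lower boundary $\psi$ of $C_m$ contradicts $F(x_t,y)=t$ for arbitrarily large $t$. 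Your modulus function produces no analogue of the points $x_t$, so Lemma~\ref{lem:partition} alone does not finish the job; you should replace $\varphi$ by the level-set functions $\varphi_t$ before splitting into cases.
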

\begin{proof}
We first demonstrate that the condition (1) implies the condition (2).

There exist an unbounded subinterval $I$ of $M$, two elements $0^*$ and $1^*$ in $I$, and definable functions $\oplus, \otimes: I \times I \rightarrow I$ such that $(I,0^*,1^*,\oplus,\otimes)$ is a real closed field with the ordering $<$ by Proposition \ref{prop:edmundo}.
The subinterval $I$ is obviously an open interval.

If $I=M$, the assertion (2) directly follows from the original definable Tietze extension theorem \cite[Chapter 8, Corollary 3.10]{vdD}.

We next consider the other case.
Consider a definable continuous function $f:A \rightarrow M$ defined on a definable closed subset $A$ of $M^n$.
We construct a definable continuous extension $F:M^n \rightarrow M$ of the function $f$.
There exists a definable homeomorphism $\sigma:M \rightarrow I$ by Lemma \ref{lem:unbound1} and Lemma \ref{lem:unbound0}.
The notation $\sigma_n$ denotes the homeomorphism from $M^n$ onto $I^n$ induced by $\sigma$.
The definable set $\sigma_n(A)$ is contained $I^n$.
Consider the definable continuous function $f_{\sigma}:\sigma_n(A) \rightarrow I$ defined by $f_{\sigma}(x)=(\sigma \circ f \circ \sigma_n^{-1})(x)$.
Its graph is obviously contained in $I^{n+1}$.

We consider a new structure $\mathcal I$ whose universe is $I$.
Let $\mathfrak S_n$ be the set of all subset of $I^n$ definable in $\mathcal M$.
Set $\mathfrak S = \bigcup_{n \geq 0} \mathfrak S_n$.
For any $S \in \mathfrak S$, we introduce new predicate symbol $R_S$ and we define $\mathcal I \models R_S(x)$ by $x \in S$. 
The structure $\mathcal I=(I, <, \{R_S\}_{S \in \mathfrak S})$ is obviously an o-minimal structure.
Since the operators $\oplus$ and $\otimes$ are definable in $\mathcal I$, the structure $\mathcal I$ is an o-minimal expansion of an ordered field.
The $\mathcal M$-definable set $\sigma_n(A)$ and the $\mathcal M$-definable function $f_{\sigma}$ are also definable in the structure $\mathcal I$.
Note that the function $f_{\sigma}$ is also continuous under the topology induced by the ordering of the real closed field $(I,0^*,1^*,\oplus,\otimes)$ because the both structure $\mathcal I$ and $\mathcal M$ shares the same order $<$.
There exists a continuous extension $F_\sigma:I^n \rightarrow I$ of $f_\sigma$ definable in $\mathcal I$ by the original definable Tietze extension theorem \cite[Chapter 8, Corollary 3.10]{vdD}.
The function $F_\sigma$ is also definable in $\mathcal M$ by the definition of the structure $\mathcal I$.
Set $F=\sigma^{-1} \circ F_{\sigma} \circ \sigma_n$.
It is the desired definable continuous extension of $f$ definable in $\mathcal M$.

We next show that the condition (2) implies the condition (1).
We construct a definable bijection between a bounded interval and an unbounded interval.
Take a positive element $c$ in $M$. 
Consider the definable closed set $A=\{(x,y) \in M^2\;|\; x \leq 0 \text{ or } x \geq c\}$ and the definable continuous function $f:A \rightarrow M$ given by $f(x,y)=y$ if $x \geq c$ and $f(x,y)=0$ otherwise.
By the condition (2), there exists a definable continuous extension $F:M^2 \rightarrow M$ of $f$.
The notation $g$ denotes the restriction of $F$ to $[0,c] \times M$.

Consider the sets $S_{t,y}=\{x \in [0,c]\;|\;g(x,y)=t\}$ for all $t \geq 0$ and $y \geq 0$.
The definable sets $S_{t,y}$ is not empty for $y>t$ by the intermediate value theorem \cite[Corollary 1.5]{M}.
The definable function $\varphi_t:M_{>t} \rightarrow [0,c]$ is given by $\varphi_t(y)=\sup S_{t,y}$.
For any $t>0$, there exists a nonnegative $u_t$ such that the restriction $\varphi_t|_{M_{>u_t}}$ of the function $\varphi_t(y)$ to $M_{>u_t}=\{y \in M\;|\; y>u_t\}$ is continuous and strictly monotone or constant for $y>u_t$ by the monotonicity theorem.

We consider the following two cases separately.
\begin{enumerate}
\item[(a)] The restriction $\varphi_t|_{M_{>u_t}}$ is continuous and strictly monotone for some $t>0$.
\item[(b)] The restriction $\varphi_t|_{M_{>u_t}}$ is constant for any $t>0$.
\end{enumerate}
In the case (a), the restriction $\varphi_t|_{M_{>u_t}}$ gives a bijection between a bounded interval and an unbounded interval.
We have finished the proof in this case.
We concentrate on the case (b).
By the definition of the function $\varphi_t$, the following assertion holds true:
\begin{quotation}
For any $t>0$, there exist a point $x_t \in [0,c]$ and a nonnegative $u_t$ such that $g(x_t,y)=t$ for all $y>u_t$.
\end{quotation}
In fact, we have only to take $y'>u_t$ and set $x_t=\varphi_t(y')$. 

We introduce terminologies only used in the proof.
A cell $C$ in $M^2$ is called a \textit{roof} if $C$ is one of the following forms:
\begin{enumerate}
\item[(i)] $C=\{(a,y) \in M^2\;|\; y>b\}$;
\item[(ii)] $C=\{(x,y) \in M^2\;|\; a_1<x<a_2, \ y> \psi(x)\}$.
\end{enumerate}
Here, $a,b, a_1,a_2$ are constants and $\psi$ is a definable continuous function defined on the open interval $]a_1,a_2[$.
The first one is called a \textit{non-open roof} and the second one is called an \textit{open roof}.

Apply Lemma \ref{lem:partition} to the definable continuous function $g$.
Let $\{C_1, \ldots, C_n\}$ be the obtained partition of $[0,c] \times M$.
Permuting if necessary, we may assume that $C_i$ are open roofs for $1 \leq i \leq k$ and $C_i$ are non-open roofs for $k < i \leq l$.
Consider the sets $$T_i = \{t>0\;|\; \exists y,\ y>u_t \text{ and } (x_t,y) \in C_i\}$$ for all $1 \leq i \leq k$.
We demonstrate that the set $T_m$ is unbounded for some $1 \leq m \leq k$.
Assume the contrary.
There exists $v$ such that, for any $t>v$, the intersections of $V_t=\{(x_t,y) \in M^2\;|\; y>u_t\}$ with $C_i$ are empty sets for all $1 \leq i \leq k$.
However, by the definition of the roofs, $V_t$ intersects with some roof.
Therefore, $V_t$ intersects with some non-open roof for any $t>v$.
Since we have $x_t \neq x_{t'}$ when $t \neq t'$, a non-open roof can intersect with only one of $V_t$'s.
It means that the set $\{t \in M\;|\; t>v\}$ is finite.
It is a contradiction.
We have shown that the set $T_m$ is unbounded for some $1 \leq m \leq k$.
We fix such $m$.

The open roof $C_m$ is of the form:
$$C_m=\{(x,y) \in M^2\;|\; a_1<x<a_2, \ y> \psi(x)\}\text{,}$$
where $a_1$ and $a_2$ are constants and $\psi$ is a definable continuous function defined on $]a_1,a_2[$.
We demonstrate that the function $\psi$ is unbounded.
Once $\psi$ is proved to be unbounded, the restriction of $\psi$ to some bounded interval is a definable homeomorphism between the bounded interval and an unbounded interval by the monotonicity theorem.
We have demonstrated the theorem.

The remaining task is to demonstrate that $\psi$ is unbounded.
By the definition of $C_m$, the functions $g_x: M_{> \psi(x)} \rightarrow M$ given by $g_x(y)=g(x,y)$ satisfies one of the following conditions:
\begin{itemize}
\item The function $g_x$ are constant for all $a_1<x<a_2$;
\item The function $g_x$ are strictly monotone for all $a_1<x<a_2$.
\end{itemize}
Take $t \in T_m$, then $g_{x_t}(y)$ is constant on $y> \max\{u_t,\psi(x_t)\}$ by the assumption (b).
Therefore, the first condition is satisfied.
We lead to a contradiction assuming that the function $\psi$ is bounded.
Take $w \in M$ with $w>\psi(x)$ for all $a_1 < x <a_2$.
We can take the maximum $N$ of the restriction of $g$ to $[0,c] \times \{w\}$ by the max-min theorem \cite[Corollary of Proposition 1.10]{M}.
Since $g_x$ are constant for all $a_1<x<a_2$, we have $g(x,y) \leq N$ for all $(x,y) \in C_m$.
Fix $t \in M$ with $t>N$ and $t \in T_m$.
Such $t$ exists because $T_m$ is unbounded.
We can take $y_0 \in M$ with $(x_t,y_0) \in C_m$ by the definition of $T_m$.
We have $g(x_t,y_0)=t>N$.
Contradiction to the definition of $N$. 
\end{proof}

\begin{remark}
We make a comment on Theorem \ref{thm:appendix}.
Miller and Starchenko studied asymptotic behavior of o-minimal expansion of an ordered group $\mathcal M=(M,<,+, \ldots)$ in \cite{MS}. 
They introduced the notion of linear boundedness.
An o-minimal structure is called \textit{linearly bounded} if, for any definable function $f:M \rightarrow M$, there exists a definable automorphism $\lambda:M \rightarrow M$ with $|f(x)| \leq \lambda(x)$ for all sufficiently large $x \in M$.
Their main theorem is that there exists a definable binary operation $\cdot$  such that $(M,<,+,\cdot)$ is an ordered real closed field when the structure is not linearly bounded.

Peterzil and Edmundo studied the subclass of linearly bounded o-minimal expansions of ordered groups \cite{E,P}.
An o-minimal structure $\mathcal M$ is \textit{semi-bounded} if any set definable in $\mathcal M$ is already definable in the o-minimal structure generated by the collection of all bounded sets definable in $\mathcal M$.
Edmundo gave equivalent conditions for an o-minimal expansion of an ordered group to be semi-bounded in \cite[Fact 1.6]{E}.
The condition (1) in our theorem is the negation of one of them.
Theorem \ref{thm:appendix} gives a new equivalent condition.
An o-minimal expansion of an ordered group is semi-bounded if and only if it does not have definable Tietze extension property.
\end{remark}


\begin{thebibliography}{99}
\bibitem{AF}
M. Aschenbrenner and A. Fischer,
{Definable versions of theorems by Kirszbraun and Helly},
Proc. Lond. Math. Soc. (3), {102} (2011), 468-502.

\bibitem{AT}
M. Aschenbrenner and A. Thamrongthanyalak,
{Michael's selection theorem in a semilinear context},
Adv. Geom., {15} (2015), 293-313.

\bibitem{vdD}
L. van den Dries, 
{Tame topology and o-minimal structures},
London Mathematical Society Lecture Note Series, vol. 248.
Cambridge University Press, (Cambridge, 1998).

\bibitem{E}
M. J. Edmundo,
{Structure theorems for o-minimal expansions of groups},
Ann. Pure Appl. Logic, {102} (2000), 159--181.

\bibitem{Fuji}
M. Fujita,
{Uniformly locally o-minimal structures and locally o-minimal structures admitting local definable cell decomposition},
Ann. Pure Appl. Logic, {171} (2020), 102756.

\bibitem{Fuji2}
M. Fujita,
{Dimension inequality for a uniformly locally o-minimal structure of the second kind},
J. Symbolic Logic, to appear.

\bibitem{Fuji3}
M. Fujita,
{Locally o-minimal structures with tame topological properties},
preprint, 2020.

\bibitem{KPS}
J. Knight, A. Pillay and C. Steinhorn, 
{Definable sets in ordered structure II},
Trans. Amer. Math. Soc., {295} (1986), 593-605.

\bibitem{M}
C. Miller,
{Expansions of dense linear orders with the intermediate value property},
J. Symbolic Logic, {66} (2001), 1783-1790.

\bibitem{MS}
C. Miller and S. Starchenko,
{A growth dichotomy for o-minimal expansions of ordered groups},
Trans. Amer. Math. Soc., {350} (1998), 3505--3521.

\bibitem{P}
Y. Peterzil, 
{A structure theorem for semi-bounded sets in the reals},
 J. Symbolic Logic, {57} (1992), 779--794.

\bibitem{PS}
A. Pillay and C. Steinhorn, 
{Definable sets in ordered structure I},
Trans. Amer. Math. Soc., {295} (1986), 565-592.

\bibitem{T}
A. Thamrongthanyalak, 
\textit{Michael's selection theorem in d-minimal expansions of the real field},
Proc. Amer. Math. Soc., \textbf{147} (2019), 1059-1071.

\bibitem{TV}
C. Toffalori and K. Vozoris, 
{Notes on local o-minimality},
MLQ Math. Log. Q., {55} (2009), 617-632.
\end{thebibliography}
\end{document}